\documentclass[11pt]{article}
\usepackage{amsmath, amssymb,epsfig,bm}
\usepackage{latexsym}

\usepackage{graphicx}
\usepackage{color}
\usepackage{booktabs} 
\usepackage{algorithm}
\usepackage{algpseudocode}
\usepackage{subcaption}
\usepackage{graphicx} 
\usepackage{url}
\usepackage[T1]{fontenc}

\numberwithin{equation}{section}

\def\pproof{\hfill {\small $\Box$} \\}

\def\m{\mbox}

\newtheorem{lemma}{Lemma}[section]

\newtheorem{cor}{Corollary}[section]
\newtheorem{theorem}{Theorem}[section]
\newtheorem{rem}{Remark}[section]
\newtheorem{example}{Example}[section]

\newcommand{\lla}{\|{\hskip -1pt}|}
\newcommand{\rra}{\|{\hskip -1pt}|}

\newcommand{\vep}{\varepsilon}

\newcommand{\diam}{\mathrm{diam}}

\newcommand{\E}{{\mathbb{E}}}

\newcommand{\sd}{\mathsf{d}}

\newcommand{\be}{\begin{eqnarray}}
\newcommand{\ee}{\end{eqnarray}}
\newcommand{\beq}{\begin{equation}}
\newcommand{\eeq}{\end{equation}}
\newcommand{\ben}{\begin{eqnarray*}}
\newcommand{\een}{\end{eqnarray*}}

\newtheorem{assu}{Assumption}

\numberwithin{assu}{section}
\numberwithin{theorem}{section}
\numberwithin{alg}{section}

\begin{document}

\title{Stochastic Convergence Analysis for Large-Scale Linear Discrete Ill-posed Problems
}

\author{
Duan-Peng Ling\thanks{Department of Mathematics,
Southern University of Science and Technology (SUSTech),
1088 Xueyuan Boulevard, 
University Town of Shenzhen,
Xili, Nanshan, Shenzhen, Guangdong Province, P.R.China. (lingdp2023@mail.sustech.edu.cn).
}
\and Wenlong Zhang\thanks{Corresponding author. Department of Mathematics $\&$  National Center for Applied Mathematics Shenzhen, Southern University of Science and Technology (SUSTech), 1088 Xueyuan Boulevard, University Town of Shenzhen, Xili, Nanshan, Shenzhen, Guangdong Province, P.R.China. (zhangwl@sustech.edu.cn).  The work of Zhang was supported by the National Natural Science Foundation of China under grant numbers No.12371423, No.12241104 and No.12561160122 and the fund of the Guangdong Provincial Key Laboratory of Computational Science and Material Design (No.2019030301001).
}
}

\date{}
\maketitle

\begin{abstract}
We study weighted Tikhonov regularization for large-scale linear discrete ill-posed problems with random noise. Under a polynomial upper-bound assumption on the generalized eigenvalues of the discrete forward operator, we derive stochastic error bounds for two noise models: expectation bounds for independent zero-mean bounded-variance noise, and high-probability bounds for independent sub-Gaussian noise. The analysis yields an a priori parameter-choice rule and suggests an adaptive strategy suitable for large-scale computation. Numerical experiments support the theory and show that the predicted parameter is nearly optimal and that the adaptive method is effective in practice.  

\end{abstract}


{\footnotesize {\bf Keywords}: 
Inverse problems, linear discrete ill-posed problems, regularization method, stochastic error estimates, optimal regularization parameter. 
}

\section{Introduction}

Inverse problems play an important role in many areas of science and engineering, such as astronomy, geoscience, biomedical imaging, mining engineering, and medical diagnostics; see, for example, \cite{andrews1977digital,boden1996massively,bose1998high,buzug2008computed,chung2006numerical,epstein2007,haber2014computational,hansen2006deblurring,hansen2018air,Jefferies2002blind,Wang2015regularization}. A fundamental objective in the study of inverse problems is to recover physically meaningful information from indirect measurements, which are typically contaminated by noise and other sources of uncertainty. In this paper, we consider a broad class of inverse problems that, after discretization, can be formulated as large-scale linear systems of the form
\begin{equation}\label{eq:data}
b=Ax^*+e,
\end{equation}
where \(x^* \in \mathbb{R}^n\) denotes the unknown parameter vector or quantity of interest, \(b \in \mathbb{R}^n\) represents the observed data, and \(A \in \mathbb{R}^{n\times n}\) is the discretized forward operator associated with the underlying physical or measurement process. The perturbation term \(e = \{e_i\}^n_{i=1} \in \mathbb{R}^n\) accounts for measurement noise, model errors, discretization effects, and finite-precision inaccuracies. In many applications, the matrix \(A\) is ill-conditioned and the data \(b\) are contaminated by noise, rendering direct inversion highly unstable. As a result, the discretized system is typically a linear discrete ill-posed problem, and this has motivated a substantial body of work on regularization methods and parameter-choice strategies. Classical contributions in this direction include generalized cross-validation and the L-curve criterion; see, for example, \cite{GolubHeathWahba1979,Hansen1992,HansenOLeary1993}.

Regularization is therefore indispensable for computing meaningful approximations. Among the available approaches, Tikhonov-type methods remain central because they offer a robust balance between data fidelity and stability and admit a transparent spectral interpretation. In this work we consider the weighted least-squares formulation
\begin{equation}\label{p1}
x_\lambda
=
\arg\min_{x\in\mathbb{R}^n}
\left\{
\|Ax-b\|^2+\lambda\|x\|_W^2
\right\},
\end{equation}
where \(\lambda>0\) is the regularization parameter, \(W\) is a symmetric positive-definite weight matrix, and \(\|x\|_W^2 := x^\top W x\). This formulation includes classical Tikhonov regularization as the special case \(W=I\), while also allowing weighted penalties that incorporate prior structural information on the unknown. The practical success of this method depends crucially on the choice of the parameter \(\lambda\), which must balance data fidelity against stability. If \(\lambda\) is chosen too small, the reconstruction is dominated by noise; if it is too large, important features of the exact solution are oversmoothed.

The computational difficulty becomes more pronounced in large-scale inverse problems, where fine discretizations or high-dimensional measurements make direct factorizations and full spectral decompositions impractical. In this regime, iterative regularization and hybrid projection methods have become fundamental tools. Representative contributions include hybrid LSMR for large-scale Tikhonov regularization, generalized hybrid methods for Bayesian inverse problems, recycling-based hybrid projection methods, and the recent survey on hybrid projection methods for large-scale inverse problems \cite{ChungPalmer2015,ChungSaibaba2017,JiangChungDeSturler2021,ChungGazzola2024}.

Recent work has broadened this picture in several directions. On the algorithmic side, projected and iterated Tikhonov schemes with adaptive parameter choice have been developed for large-scale problems, together with refined convergence analyses for Arnoldi--Tikhonov-type methods \cite{BucciniGazzolaOniskPashaReichel2025,BianchiDonatelliFurchiReichel2025}. On the stochastic side, early stopping rules for stochastic mirror descent methods have been analyzed for ill-posed inverse problems \cite{HuangJinLuZhang2025}. At the same time, data-driven inversion has renewed attention to regularization methods that remain convergent in the classical sense \cite{HauptmannMukherjeeSchoenliebSherry2025}. These developments highlight the need for analyses that explicitly connect stochastic noise models, spectral decay, and computationally feasible regularization strategies.

Despite recent progress, stochastic convergence results for large-scale linear discrete ill-posed problems that simultaneously capture the effects of the problem size, noise strength, regularization parameter, and spectral decay remain relatively limited. This issue is particularly relevant when the data perturbation is random and one seeks estimates that are informative both theoretically and computationally, and that can also guide parameter selection.

In this paper, we investigate a Tikhonov-type regularization method for large-scale linear inverse problems with random noise. Our analysis is carried out under the following polynomial upper-bound assumption on the generalized eigenvalues associated with the discrete forward operator.
\begin{assu}\label{Assumption1}
The eigenvalues \(\rho_1 \geq \rho_2 \geq \cdots \geq \rho_n\) of the generalized eigenvalue problem
\begin{equation}\label{ccc}
(A\psi,Au)=\rho(\psi,u)_W \qquad \forall u\in \mathbb{R}^n,
\end{equation}
satisfy
\[
\rho_k \leq C k^{-\alpha}, \qquad k=1,2,\dots,n,
\]
for some constant \(\alpha>1\).
\end{assu}
We consider two noise models:
\smallskip

({\bf M1}) \(\{e_i\}_{i=1}^n\) are independent random variables satisfying
\(\mathbb{E}[e_i]=0\) and \(\mathbb{E}[e_i^2]\leq \sigma^2\);
\smallskip

({\bf M2}) \(\{e_i\}_{i=1}^n\) are independent sub-Gaussian random variables with parameter \(\sigma\).

To facilitate a consistent comparison across different discretization levels and to reduce scaling effects induced by the discretization of the forward operator, we use the normalized Euclidean norm \(\frac{1}{\sqrt n}\|x\|\). Throughout the paper, \(C\) denotes a generic positive constant whose value may change from line to line.

The main contributions of this paper are as follows. First, under the bounded-variance noise model {\bf (M1)}, we establish expectation-based stochastic convergence estimates for Tikhonov-regularized solutions, with explicit dependence on the noise level, the regularization parameter, and the spectral decay of the forward operator. Second, under the sub-Gaussian noise model {\bf (M2)}, we derive stronger high-probability bounds by combining concentration arguments with covering entropy estimates. Third, these results yield an a priori parameter-choice rule and further motivate an adaptive parameter-selection algorithm that is computationally feasible for large-scale problems. Finally, we validate the theory by numerical experiments on representative linear inverse problems, demonstrating both the near-optimality of the predicted parameter choice and the practical effectiveness of the proposed adaptive method.

The remainder of this paper is organized as follows. In Section \ref{sec:bv}, we study the case of independent noise with bounded variance and derive expectation-based stochastic convergence estimates for the weighted Tikhonov regularized solutions. Section \ref{sec:gaussian} is devoted to the sub-Gaussian noise setting, where we combine concentration arguments with covering entropy estimates to establish sharper high-probability error bounds and the corresponding a priori parameter-choice rule. In Section \ref{sec:adaptive}, motivated by the theoretical analysis, we introduce a data-driven adaptive procedure for selecting the regularization parameter in large-scale problems. Finally, Section \ref{sec:num} presents numerical experiments, illustrating the near-optimality of the predicted parameter choice and the practical effectiveness of the proposed adaptive method.

\section{Stochastic convergence for noisy data of variables with bounded variance}\label{sec:bv}
In this section, we consider the bounded-variance noise model {\bf (M1)} and derive expectation-based stochastic convergence estimates for the error.

\begin{lemma}\label{lemma:eigv}
Let \( \rho_1 \ge \rho_2 \ge \cdots \ge \rho_n \) be the eigenvalues of the generalized eigenvalue problem
\[
 (A\psi,Au) = \rho(\psi,u)_W , \quad \forall u\in\mathbb{R}^n.
 \]
Then the corresponding eigenvectors \(\{\psi_k\}_{k=1}^n\) form an orthonormal basis of \(\mathbb{R}^n\) under the inner product \((\cdot,\cdot)_W\), and satisfy \((A\psi_i, A\psi_j) = \rho_i \delta_{ij}\).
\end{lemma}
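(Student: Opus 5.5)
We will reduce the generalized eigenvalue problem to a standard symmetric eigenvalue problem and then apply the spectral theorem. Since \(W\) is symmetric positive definite, it has a symmetric positive definite square root \(W^{1/2}\); alternatively, a Cholesky factor \(W=LL^\top\) would serve equally well. In matrix form, the defining relation \((A\psi,Au)=\rho(\psi,u)_W\) for all \(u\in\mathbb{R}^n\) reads \(u^\top A^\top A\psi=\rho\, u^\top W\psi\) for all \(u\), that is,
\[
A^\top A\psi=\rho W\psi .
\]
Setting \(\phi=W^{1/2}\psi\) turns this into
\[
B\phi=\rho\phi, \qquad B:=W^{-1/2}A^\top A W^{-1/2}.
\]
The matrix \(B\) is symmetric and positive semidefinite. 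Hence its eigenvalues coincide with the \(\rho_k\) (counted with multiplicity), and they are real and nonnegative.

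By the spectral theorem, \(B\) admits an orthonormal eigenbasis \(\{\phi_k\}_{k=1}^n\) of \(\mathbb{R}^n\) in the Euclidean inner product, with \(B\phi_k=\rho_k\phi_k\). We define \(\psi_k:=W^{-1/2}\phi_k\). Then \(A^\top A\psi_k=\rho_k W\psi_k\), so each \(\psi_k\) is a generalized eigenvector. Moreover,
\[
(\psi_i,\psi_j)_W=\psi_i^\top W\psi_j=\phi_i^\top\phi_j=\delta_{ij}.
\]
Since \(W^{-1/2}\) is invertible, \(\{\psi_k\}\) is a basis of \(\mathbb{R}^n\), and it is \(W\)-orthonormal. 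Finally, taking \(u=\psi_i\) and \(\psi=\psi_j\) in the defining relation gives
\[
(A\psi_j,A\psi_i)=\rho_j(\psi_j,\psi_i)_W=\rho_j\delta_{ij},
\]
which is the second claim.

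The only subtle point is the interpretation of the phrase ``the corresponding eigenvectors''. When some \(\rho_k\) are repeated, the eigenvectors are not unique, and an arbitrary choice within an eigenspace need not be \(W\)-orthogonal. We will therefore read the statement as asserting that the eigenvectors can be chosen to form a \(W\)-orthonormal basis, which is exactly what the construction above provides. For distinct eigenvalues, \(W\)-orthogonality is automatic by the usual symmetry argument: if \(A^\top A\psi_i=\rho_i W\psi_i\) and \(A^\top A\psi_j=\rho_j W\psi_j\), then
\[
(\rho_i-\rho_j)(\psi_i,\psi_j)_W=0 .
\]
Within a repeated eigenspace we will instead apply Gram--Schmidt in \((\cdot,\cdot)_W\), which is consistent with the construction via \(B\). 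We do not expect any further obstacle.
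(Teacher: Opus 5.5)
Your proposal is correct and takes essentially the same route as the paper: rewrite the problem as $A^\top A\psi=\rho W\psi$, symmetrize to $W^{-1/2}A^\top A W^{-1/2}$, take a Euclidean-orthonormal eigenbasis by the spectral theorem, and set $\psi_k=W^{-1/2}z_k$ to obtain $W$-orthonormality and $(A\psi_i,A\psi_j)=\rho_i\delta_{ij}$. Your remark that, for repeated eigenvalues, the statement means the eigenvectors \emph{can be chosen} $W$-orthonormal is a point the paper leaves implicit in its construction.
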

\begin{proof}

The eigenvalue problem (\ref{ccc}) is equivalent to the generalized matrix eigenvalue problem
\[
A^\top A\psi = \rho W\psi.
\]

Define \(M := W^{-1/2} A^\top A W^{-1/2}.\) The matrix \(M\) is symmetric and positive semidefinite. Hence, there exist real eigenvalues \(\rho_1 \ge \rho_2 \ge \cdots \ge \rho_n \ge 0\) and an orthonormal basis \(\{z_k\}_{k=1}^n\) of \(\mathbb{R}^n\) such that
\[
Mz_k = \rho_k z_k, \qquad (z_i,z_j)=\delta_{ij}.
\]
Define \(\psi_k := W^{-1/2} z_k\), \(k=1,\dots,n.\) Then \(\psi_k\) satisfies
\[
A^\top A\psi_k = \rho_k W\psi_k,
\]
and hence is an eigenvector of the generalized eigenvalue problem (\ref{ccc}).
Let \(\psi_i\) and \(\psi_j\) be eigenvectors corresponding to eigenvalues \(\rho_i\) and \(\rho_j\), respectively. Then for any \((i,j)\),
\[
(\psi_i,\psi_j)_W
= (W^{1/2}\psi_i, W^{1/2}\psi_j)
= (z_i,z_j)
= \delta_{ij}.
\]
Therefore, the eigenvectors \(\{\psi_k\}_{k=1}^n\) form an orthonormal basis of \(\mathbb{R}^n\) under the inner product \((\cdot,\cdot)_W\). Moreover,
\[
(A\psi_i,A\psi_j)=\rho_i(\psi_i,\psi_j)_W = \rho_i\delta_{ij}.
\]
\pproof
\end{proof}

\begin{theorem}\label{thm:2.1}
Suppose that Assumption \ref{Assumption1} holds, and let $x_\lambda \in X$ be the unique minimizer of \eqref{p1}. Then there exist constants $\lambda_0>0$ and $C>0$, independent of $n$, $\lambda$, and $\sigma$, such that for every $0<\lambda\le \lambda_0$,
\begin{align}
\mathbb{E}\Big[\frac1n\|Ax_\lambda-Ax^*\|^2\Big]
&\le C\lambda \frac1n\|x^*\|_W^2
   + \frac{C\sigma^2}{n\lambda^{1/\alpha}},
\label{p5}\\
\mathbb{E}\Big[\frac1n\|x_\lambda-x^*\|_W^2\Big]
&\le C\frac1n\|x^*\|_W^2
   + \frac{C\sigma^2}{n\lambda^{1+1/\alpha}}.
\label{p6}
\end{align}
\end{theorem}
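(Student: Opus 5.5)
Expand everything in the generalized eigenbasis $\{\psi_k\}$ of Lemma \ref{lemma:eigv}. The minimizer of \eqref{p1} solves the normal equation $(A^\top A+\lambda W)x_\lambda = A^\top b$. Write $x^*=\sum_k \xi_k\psi_k$ with $\xi_k=(x^*,\psi_k)_W$, so that $\|x^*\|_W^2=\sum_k\xi_k^2$. Since $A^\top A\psi_k=\rho_kW\psi_k$, testing the normal equation against $\psi_k$ gives
\[
(x_\lambda,\psi_k)_W=\frac{\rho_k\xi_k+(e,A\psi_k)}{\rho_k+\lambda}.
\]
The error therefore splits into a deterministic bias part and a noise part:
\[
x_\lambda-x^*=\sum_k\Big(-\frac{\lambda}{\rho_k+\lambda}\xi_k+\frac{(e,A\psi_k)}{\rho_k+\lambda}\Big)\psi_k .
\]
By Lemma \ref{lemma:eigv}, $\|Av\|^2=\sum_k\rho_k v_k^2$ and $\|v\|_W^2=\sum_k v_k^2$ for $v=\sum_k v_k\psi_k$. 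Under ({\bf M1}) the cross terms vanish in expectation because $\mathbb{E}e=0$, so it suffices to bound the bias and the variance separately.

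\textbf{Bias.} For the first estimate we use $\rho_k\lambda^2/(\rho_k+\lambda)^2\le \lambda/4\le\lambda$, which gives the bias bound $\lambda\|x^*\|_W^2$. For the second we use $\lambda^2/(\rho_k+\lambda)^2\le1$, which gives $\|x^*\|_W^2$. Dividing by $n$ yields the first terms of \eqref{p5} and \eqref{p6}.

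\textbf{Variance.} By independence, $\mathbb{E}(e,A\psi_k)^2=\sum_i\mathbb{E}e_i^2(A\psi_k)_i^2\le\sigma^2\|A\psi_k\|^2=\sigma^2\rho_k$. The variance terms are therefore bounded by
\[
\sigma^2\sum_{k=1}^n\frac{\rho_k^2}{(\rho_k+\lambda)^2}
\quad\text{and}\quad
\sigma^2\sum_{k=1}^n\frac{\rho_k}{(\rho_k+\lambda)^2}.
\]

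The main step is to bound these spectral sums uniformly in $n$ using Assumption \ref{Assumption1}. Both summands are increasing in $\rho_k$: $t^2/(t+\lambda)^2$ is increasing, and $t/(t+\lambda)^2\le \min(1/\lambda,\,t/\lambda^2)$. So we may replace $\rho_k$ by $Ck^{-\alpha}$, or more simply use these elementary majorants. Split the sum at $k_0=\lceil\lambda^{-1/\alpha}\rceil$.

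For the first sum, the terms with $k\le k_0$ are each at most $1$ and contribute at most $k_0\le C\lambda^{-1/\alpha}$ once $\lambda\le\lambda_0$. The tail contributes at most $\lambda^{-2}\sum_{k>k_0}C^2k^{-2\alpha}\le C\lambda^{-2}k_0^{1-2\alpha}\le C\lambda^{-1/\alpha}$.

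For the second sum, the head contributes at most $k_0/\lambda\le C\lambda^{-1-1/\alpha}$. The tail contributes at most $\lambda^{-2}\sum_{k>k_0}Ck^{-\alpha}\le C\lambda^{-2}k_0^{1-\alpha}\le C\lambda^{-2}\lambda^{(\alpha-1)/\alpha}=C\lambda^{-1-1/\alpha}$. This tail estimate is where $\alpha>1$ is needed.

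Dividing by $n$ gives $C\sigma^2/(n\lambda^{1/\alpha})$ and $C\sigma^2/(n\lambda^{1+1/\alpha})$, completing \eqref{p5} and \eqref{p6}. The condition $\lambda\le\lambda_0$ (say $\lambda_0=1$) is used only to absorb the ceiling, $k_0\le 2\lambda^{-1/\alpha}$. All constants depend only on $C$ and $\alpha$, not on $n$, $\lambda$ or $\sigma$. The delicate point is this $n$-uniform tail summation; the remaining steps are routine spectral calculus.
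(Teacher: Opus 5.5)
Your proof is correct, but it takes a different route from the paper's.

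\textbf{Your route.} You solve the normal equations explicitly in the $W$-orthonormal eigenbasis. This gives an exact bias--variance decomposition with the Tikhonov filter factors $\lambda/(\rho_k+\lambda)$ and $1/(\rho_k+\lambda)$. The cross terms then vanish in expectation, leaving two separate spectral sums, $\sum_k\rho_k^2/(\rho_k+\lambda)^2$ and $\sum_k\rho_k/(\rho_k+\lambda)^2$, which you bound by splitting at $k_0\approx\lambda^{-1/\alpha}$.

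\textbf{The paper's route.} The paper never writes $x_\lambda$ in closed form. It tests the variational equation with $u=x_\lambda-x^*$ to get, in the energy norm $\lla u\rra_\lambda^2=\lambda\|u\|_W^2+\|Au\|^2$,
$\lla x_\lambda-x^*\rra_\lambda\le\lambda^{1/2}\|x^*\|_W+\sup_v (e,Av)/\lla v\rra_\lambda$.
It bounds the expected squared dual norm of the noise functional by Cauchy--Schwarz as $\sigma^2\sum_k\rho_k/(\lambda+\rho_k)$, and controls this single sum by an integral comparison. Both \eqref{p5} and \eqref{p6} then follow from one energy estimate; the $W$-norm bound is that estimate divided by $\lambda$.

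\textbf{Comparison.} Your route is more explicit and slightly sharper in constants: the bias is $\lambda/4$, and nothing is lost from squaring a sum of two terms. It also shows that both of your sums are dominated by the paper's single quantity, since $\rho_k^2/(\rho_k+\lambda)^2\le\rho_k/(\rho_k+\lambda)$ and $\lambda\rho_k/(\rho_k+\lambda)^2\le\rho_k/(\rho_k+\lambda)$. You could therefore have reused one sum for both estimates. In exchange, the paper's argument does not rely on the closed-form linear solution. It frames the noise contribution as the supremum of the process $v\mapsto(e,Av)$ over a normalized set, which is exactly what the chaining/peeling argument of Section~\ref{sec:gaussian} controls in the sub-Gaussian case.

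\textbf{A small slip.} The map $t\mapsto t/(t+\lambda)^2$ is not increasing; it peaks at $t=\lambda$. You never actually use this, because the increasing majorant $\min(1/\lambda,\,t/\lambda^2)$ is all your estimate needs.
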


\begin{proof}
By the first-order optimality condition for the quadratic minimization problem \eqref{p1}, the unique minimizer $x_\lambda\in X$ satisfies
\begin{equation}\label{p7}
\lambda (x_{\lambda},u)_W+(Ax_{\lambda},Au) =(b,Au)
\qquad \forall u\in \mathbb{R}^n.
\end{equation}
For $u\in \mathbb{R}^n$, define the energy norm
\[
\lla u\rra_\lambda^2:=\lambda (u,u)_W+\|Au\|^2.
\]
Taking $u=x_\lambda-x^*$ in \eqref{p7}, and using \eqref{eq:data}, we obtain
\begin{equation}\label{x4}
\lla x_{\lambda}-x^*\rra_{\lambda}
\le \lambda^{1/2}\|x^*\|_W+\sup_{v\in \mathbb{R}^n}\frac{(e,Av)}{\lla v\rra_{\lambda}}.
\end{equation}

Let $\rho_1\ge \rho_2\ge \cdots \ge \rho_n$ be the eigenvalues of
\begin{equation}\label{x2}
\rho(\psi,u)_W=(A\psi,Au)
\qquad \forall u\in \mathbb{R}^n,
\end{equation}
and let $\{\psi_k\}_{k=1}^n$ be the corresponding eigenvectors. By Lemma \ref{lemma:eigv},
\[
(\psi_k,\psi_l)_W=\delta_{kl},
\qquad
(A\psi_k,A\psi_l)=\rho_k\delta_{kl}.
\]
Hence, for any $v\in \mathbb{R}^n$, we may write
\[
v=\sum_{k=1}^n v_k\psi_k,
\qquad
\lla v\rra_\lambda^2=\sum_{k=1}^n(\lambda+\rho_k)v_k^2.
\]
Using the Cauchy--Schwarz inequality, we get
\begin{align*}
(e,Av)^2
&=
\left(
\sum_{k=1}^n v_k \sum_{i=1}^n e_i (A\psi_k)_i
\right)^2 \\
&\le
\Big(\sum_{k=1}^n (\lambda+\rho_k)v_k^2\Big)
\Big(\sum_{k=1}^n (\lambda+\rho_k)^{-1}\Big(\sum_{i=1}^n e_i(A\psi_k)_i\Big)^2\Big) \\
&=
\lla v\rra_\lambda^2
\sum_{k=1}^n (\lambda+\rho_k)^{-1}\Big(\sum_{i=1}^n e_i(A\psi_k)_i\Big)^2.
\end{align*}
Therefore,
\[
\sup_{v\in\mathbb{R}^n}\frac{(e,Av)^2}{\lla v\rra_\lambda^2}
\le
\sum_{k=1}^n (\lambda+\rho_k)^{-1}\Big(\sum_{i=1}^n e_i(A\psi_k)_i\Big)^2.
\]
Taking expectations and using the independence, zero mean, and variance bound $\mathbb{E}[e_i^2]\le \sigma^2$, we obtain
\begin{align*}
\mathbb{E}\Big[\sup_{v\in\mathbb{R}^n}\frac{(e,Av)^2}{\lla v\rra_\lambda^2}\Big]
&\le
\sum_{k=1}^n (\lambda+\rho_k)^{-1}
\mathbb{E}\Big(\sum_{i=1}^n e_i(A\psi_k)_i\Big)^2 \\
&\le
\sigma^2\sum_{k=1}^n \frac{\rho_k}{\lambda+\rho_k},
\end{align*}
where we also used $\|A\psi_k\|^2=\rho_k$.

By Assumption \ref{Assumption1},
\[
\sum_{k=1}^n \frac{\rho_k}{\lambda+\rho_k}
\le
C\sum_{k=1}^n \frac{1}{1+\lambda k^\alpha}
\le
C\int_1^\infty \frac{1}{1+\lambda t^\alpha}\,dt.
\]
After the change of variables $s=\lambda^{1/\alpha}t$, we obtain
\[
\int_1^\infty \frac{1}{1+\lambda t^\alpha}\,dt
=
\lambda^{-1/\alpha}\int_{\lambda^{1/\alpha}}^\infty \frac{1}{1+s^\alpha}\,ds
\le
C\lambda^{-1/\alpha}.
\]
Hence,
\begin{equation}\label{noise-bound}
\mathbb{E}\Big[\sup_{v\in\mathbb{R}^n}\frac{(e,Av)^2}{\lla v\rra_\lambda^2}\Big]
\le
C\sigma^2\lambda^{-1/\alpha}.
\end{equation}

Finally, squaring \eqref{x4}, taking expectations, and using \eqref{noise-bound}, we arrive at
\[
\mathbb{E}\big[\lla x_\lambda-x^*\rra_\lambda^2\big]
\le
C\lambda \|x^*\|_W^2 + C\sigma^2\lambda^{-1/\alpha}.
\]
Since
\[
\lla x_\lambda-x^*\rra_\lambda^2
=
\lambda\|x_\lambda-x^*\|_W^2+\|Ax_\lambda-Ax^*\|^2,
\]
the estimates \eqref{p5}--\eqref{p6} follow immediately after dividing by $n$.
\pproof
\end{proof}

Theorem \ref{thm:2.1} yields convergence in expectation for the output error $Ax_\lambda-Ax^*$, but only boundedness in expectation for the source error $x_\lambda-x^*$ in the $W$-norm. We next show that the source error is convergent in expectation in a weaker topology.

\begin{lemma}\label{lem:2.2}
Let Assumption \ref{Assumption1} be satisfied, and define
\[
B=(W^{-1/2}A^TAW^{-1/2})^{1/4}W^{1/2}.
\]
Then
\[
\mathbb{E} \Big[\frac{1}{n}\|B(x_{\lambda}-x^*)\|^2\Big]
\leq C \lambda^{1/2} \frac{1}{n}\|x^*\|^2_{W}
+ \frac{C\sigma^2}{n\lambda^{1/2+1/\alpha}}.
\]
\end{lemma}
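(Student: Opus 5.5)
The plan is to reduce the estimate to the energy-norm bound already established in the proof of Theorem \ref{thm:2.1}, namely
\[
\mathbb{E}\big[\lla x_\lambda-x^*\rra_\lambda^2\big]\le C\lambda\|x^*\|_W^2+C\sigma^2\lambda^{-1/\alpha},
\]
by means of an interpolation inequality. The inequality to aim for is that, for all $u\in\mathbb{R}^n$,
\[
\|Bu\|^2\le \|Au\|\,\|u\|_W .
\]

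First I would compute $\|Bu\|^2$ spectrally. With $M=W^{-1/2}A^TAW^{-1/2}$ and $z=W^{1/2}u$, we have $\|Bu\|^2=(M^{1/4}z,M^{1/4}z)=(M^{1/2}z,z)$. By the proof of Lemma \ref{lemma:eigv}, $M$ has orthonormal eigenvectors $z_k=W^{1/2}\psi_k$ with eigenvalues $\rho_k$. Writing $u=\sum_k u_k\psi_k$ gives $z=\sum_k u_kz_k$, and therefore
\[
\|Bu\|^2=\sum_{k=1}^n\rho_k^{1/2}u_k^2,\qquad \|Au\|^2=\sum_k\rho_ku_k^2,\qquad \|u\|_W^2=\sum_k u_k^2 .
\]
Next I would apply Cauchy--Schwarz to $\sum_k(\rho_k^{1/2}|u_k|)\,|u_k|$, which gives the interpolation inequality $\|Bu\|^2\le\|Au\|\,\|u\|_W$.

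Then I would insert the factors $\lambda^{\pm1/2}$ and use Young's inequality:
\[
\|Bu\|^2\le \lambda^{-1/2}\,\|Au\|\,\big(\lambda^{1/2}\|u\|_W\big)\le \tfrac12\lambda^{-1/2}\big(\|Au\|^2+\lambda\|u\|_W^2\big)=\tfrac12\lambda^{-1/2}\lla u\rra_\lambda^2 .
\]
Taking $u=x_\lambda-x^*$, taking expectations, and inserting the energy bound yields
\[
\mathbb{E}\|B(x_\lambda-x^*)\|^2\le C\lambda^{1/2}\|x^*\|_W^2+C\sigma^2\lambda^{-1/2-1/\alpha}.
\]
Dividing by $n$ then gives the claim.

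There is no real obstacle here. The only point needing care is correctly identifying $\|Bu\|^2$ with $\sum_k\rho_k^{1/2}u_k^2$, that is, checking that $M^{1/4}$ acts diagonally on the $z_k$ and that the change of variables $z=W^{1/2}u$ matches the $W$-orthonormal basis $\{\psi_k\}$.
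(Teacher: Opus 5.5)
Your proof is correct and follows the paper's route: you use the spectral identity $\|Bu\|^2=\sum_k\rho_k^{1/2}u_k^2$, then Cauchy--Schwarz to get $\|Bu\|^2\le\|Au\|\,\|u\|_W$, then Young's inequality with weights $\lambda^{\pm1/2}$, and finally the expectation bounds from Theorem~\ref{thm:2.1}. The only cosmetic difference is that you apply the combined energy bound $\mathbb{E}\lla x_\lambda-x^*\rra_\lambda^2\le C\lambda\|x^*\|_W^2+C\sigma^2\lambda^{-1/\alpha}$ directly rather than \eqref{p5}--\eqref{p6} separately; this is equivalent and, like the paper, tacitly inherits the restriction $0<\lambda\le\lambda_0$.
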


\begin{proof}
By Lemma \ref{lemma:eigv}, every $u\in\mathbb{R}^n$ admits the expansion
\[
u=\sum_{k=1}^n u_k\psi_k,
\qquad
u_k=(u,\psi_k)_W,
\]
and
\[
\|u\|_W^2=\sum_{k=1}^n u_k^2,
\qquad
\|Au\|^2=\sum_{k=1}^n \rho_k u_k^2.
\]
Moreover,
\[
B\psi_k=\rho_k^{1/4}W^{1/2}\psi_k,
\]
and hence
\[
\|Bu\|^2=\sum_{k=1}^n \rho_k^{1/2}u_k^2
\le
\Big(\sum_{k=1}^n u_k^2\Big)^{1/2}
\Big(\sum_{k=1}^n \rho_k u_k^2\Big)^{1/2}
=
\|u\|_W\|Au\|.
\]
Taking $u=x^*-x_\lambda$, we obtain
\begin{equation}\label{ccc0}
\|B(x^*-x_\lambda)\|^2
\le
\|Ax^*-Ax_\lambda\|\,\|x^*-x_\lambda\|_W.
\end{equation}
By Young's inequality,
\begin{equation}\label{B_eq}
\|B(x^*-x_\lambda)\|^2
\le
C\lambda^{1/2}\|x^*-x_\lambda\|_W^2
+
C\lambda^{-1/2}\|Ax^*-Ax_\lambda\|^2.
\end{equation}
Taking expectations in \eqref{B_eq} and applying \eqref{p5}--\eqref{p6} proves the result.
\end{proof}\pproof

\begin{rem}
Combining Theorem \ref{thm:2.1} and Lemma \ref{lem:2.2}, we obtain the a priori parameter choice
\begin{equation}\label{opt-parameter_w}
  \lambda^{\frac{1}{2}+\frac{1}{2\alpha}}= O\!\left(\sigma n^{-1/2}\Big(\frac{1}{\sqrt n}\|x^*\|_W\Big)^{-1}\right).
\end{equation}
With this choice, the corresponding error bounds become
\begin{align}
\mathbb{E} \Big[\frac{1}{\sqrt{n}}\|Ax_{\lambda}-Ax^*\| \Big]
&\leq C \lambda^{1/2} \frac{1}{\sqrt{n}}\|x^*\|_{W}, \label{eq:bound_A} \\
\mathbb{E} \Big[\frac{1}{\sqrt{n}}\|B(x_{\lambda}-x^*)\|\Big]
&\leq C \lambda^{1/4} \frac{1}{\sqrt{n}}\|x^*\|_{W}. \label{eq:bound_B}
\end{align}
\end{rem}

\section{Stochastic convergence for noisy data being sub-gaussian random variables}\label{sec:gaussian}

In this section, we turn to the case {\bf (M2)} for the data model \eqref{eq:data}, namely,
\begin{equation}\label{e1}
\mathbb E\Big[ \m{exp}(\lambda (e_i - \mathbb E[e_i]))\Big] \le \m{exp} \Big(\frac 12 \sigma^2 \lambda^2\Big) 
\qquad \forall\,\lambda \in \mathbb R.
\end{equation}
Under this assumption, we study the stochastic convergence behavior of the errors $\frac{1}{\sqrt{n}}\|Ax^*-Ax_{\lambda}\|$ and $\frac{1}{\sqrt{n}}\|Bx^*-Bx_{\lambda}\|$.

To facilitate the subsequent analysis, we first recall several basic notions and auxiliary results from the theory of sub-Gaussian random variables and empirical processes; see \cite{Chen-Zhang2018,Geer2000,Vaart1996} for further details and general background. A fundamental feature of a sub-Gaussian random variable is that its tail decays exponentially fast. More precisely, if $Z$ is sub-Gaussian, then its distribution satisfies
\begin{equation}\label{gg1}
\mathbb{P}(|Z-\E[Z]|\ge z)\le 2\,\m{exp}\Big(-\frac {z^2}{2\sigma^2}\Big)
\qquad \forall\, z>0.
\end{equation}
This tail bound plays a central role in deriving concentration inequalities for the stochastic quantities appearing in our analysis.

We shall also make use of Orlicz norms. Let $\psi$ be a nondecreasing convex function satisfying $\psi(0)=0$. Then the Orlicz norm of a random variable $Z$ is defined by
\begin{equation}\label{e2}
\|Z\|_\psi=\inf\Big\{C>0:\mathbb{E}\Big[\psi\Big(\frac{|Z|}{C}\Big)\Big]\le 1\Big\}.
\end{equation}
In particular, throughout most of the analysis we work with the $\psi_2$-Orlicz norm, corresponding to the function  $\psi_2(t)=e^{t^2}-1$ for $t>0$.  This norm provides a convenient way to quantify sub-Gaussian behavior. In particular, one has the estimate (see, for example, \cite[(4.5)]{Chen-Zhang2018})
\begin{equation}\label{e3}
\mathbb{P}(|Z|\ge z)\le 2\,\m{exp}\Big(-\frac{z^2}{\|Z\|_{\psi_2}^2}\Big)
\qquad \forall\, z>0.
\end{equation}

Next, let $\mathbb{T}$ be a semi-metric space endowed with a semi-metric $\sd$, and let $\{Z_t:t\in \mathbb{T}\}$ be a stochastic process indexed by $\mathbb{T}$. The process $\{Z_t:t\in \mathbb{T}\}$ is said to be sub-Gaussian if its increments satisfy
\begin{equation}\label{e41}
\mathbb{P}(|Z_s-Z_t|>z)\le 2\,\m{exp}\Big( -\frac{z^2}{2\,\sd(s,t)^2} \Big)
\qquad \forall \,s,t\in \mathbb{T},\ \forall\, z>0.
\end{equation}
This increment condition is the natural analogue of sub-Gaussianity for stochastic processes and is particularly useful in conjunction with entropy estimates.

For a semi-metric space $(\mathbb{T},\sd)$ and $\vep>0$, the covering number $N(\vep,\mathbb{T},\sd)$ is defined as the minimal number of $\vep$-balls required to cover $\mathbb{T}$. Its logarithm,
$\log N(\vep,\mathbb{T},\sd)$, is called the covering entropy. This quantity serves as a standard measure of the complexity of the index set $\mathbb{T}$ and will enter our estimates through a maximal inequality for sub-Gaussian processes.

More precisely, we shall use the following result from \cite[Section 2.2.1]{Vaart1996}.
\begin{lemma}\label{lem:3.1}
If $\{Z_t:t\in \mathbb{T}\}$ is a separable sub-Gaussian random process, then there exists a constant $K>0$ such that
\begin{equation*}
\|\sup_{s,t\in\mathbb{T}}|Z_s-Z_t|\|_{\psi_2}\le K\int^{\diam\, \mathbb{T}}_0\sqrt{\log N\Big(\frac\vep 2,\mathbb{T},\sd\Big)}\ d\vep\,.
\end{equation*}
\end{lemma}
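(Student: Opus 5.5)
This is Dudley's entropy bound in $\psi_2$-Orlicz norm, as in \cite[Section 2.2.1]{Vaart1996}; I would prove it by chaining. By separability, the supremum over $\mathbb{T}$ equals a supremum over a countable dense subset. By monotone convergence for Orlicz norms, it therefore suffices to prove the bound for $\sup_{s,t\in T_0}|Z_s-Z_t|$ with $T_0\subset\mathbb{T}$ finite, with a constant independent of $T_0$. We may also assume the entropy integral is finite.

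\textbf{Step 1 (finite maxima).} First I would prove the finite maximal inequality. If each $Y_1,\dots,Y_m$ satisfies $\|Y_j\|_{\psi_2}\le c$, then
\[
\|\max_{j\le m}|Y_j|\|_{\psi_2}\le K c\sqrt{\log(1+m)}.
\]
The increment condition \eqref{e41} gives $\|Z_s-Z_t\|_{\psi_2}\le C\,\sd(s,t)$, by integrating the tail bound: $\mathbb{E}\,e^{X^2/C^2}=1+\int_0^\infty \mathbb{P}(|X|>\sqrt{C^2\log(1+u)})\,du$. For the maximum, I would use the elementary inequality $\psi_2(x)\psi_2(y)\le\psi_2(cxy)$ for $x,y\ge1$ from van der Vaart--Wellner. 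Alternatively, use a union bound on the tails $\mathbb{P}(\max_j|Y_j|>z)\le 2m\,e^{-z^2/c^2}$ and then reintegrate, splitting at $z\approx c\sqrt{\log(1+m)}$.

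\textbf{Step 2 (chaining).} Set $D=\diam\,\mathbb{T}$ and $\vep_k=D2^{-k}$. For each $k$, choose a minimal $\vep_k$-net $T_k\subset\mathbb{T}$ with $|T_k|=N(\vep_k,\mathbb{T},\sd)=:N_k$; the center of a single ball gives $N_0=1$. Because $T_0$ is finite, there is a level $k_1$ at which we may take $T_{k_1}\supseteq T_0$ with each point as its own projection. Let $\pi_k t\in T_k$ be a nearest point to $t$, so that $\sd(\pi_k t,\pi_{k-1}t)\le \vep_k+\vep_{k-1}=3\vep_k$. Telescoping gives
\[
Z_t-Z_{\pi_0 t}=\sum_{k=1}^{k_1}\big(Z_{\pi_k t}-Z_{\pi_{k-1}t}\big).
\]
At level $k$ the number of distinct links is at most $N_kN_{k-1}\le N_k^2$. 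Step 1 then gives
\[
\Big\|\max_t|Z_{\pi_k t}-Z_{\pi_{k-1}t}|\Big\|_{\psi_2}\le K\vep_k\sqrt{\log(1+N_k^2)}.
\]
The triangle inequality for $\|\cdot\|_{\psi_2}$ then yields
\[
\|\sup_t|Z_t-Z_{\pi_0t}|\|_{\psi_2}\le K\sum_k\vep_k\sqrt{\log(1+N_k)}.
\]
Since $\pi_0$ is constant, the bound for $\sup_{s,t}|Z_s-Z_t|$ follows with a doubled constant.

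\textbf{Step 3 (sum to integral).} Since $\vep_k=2(\vep_k-\vep_{k+1})$ and $N(\cdot)$ is nonincreasing,
\[
\vep_k\sqrt{\log N_k}\le 2\int_{\vep_{k+1}}^{\vep_k}\sqrt{\log N(\vep,\mathbb{T},\sd)}\,d\vep.
\]
This is why the statement carries $\vep/2$ inside $N$: after rescaling, the integral over $[0,D]$ of $\sqrt{\log N(\vep/2)}$ dominates the sum.

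\textbf{Main obstacle.} The delicate point is the levels where $N_k$ is small, especially $N_k=1$. There $\sqrt{\log N_k}=0$, but the chaining term carries $\sqrt{\log(1+N_k)}$, which need not vanish. I would handle this by noting that $N(\vep_k)=1$ with $\vep_k<D/2$ is impossible: a single ball of radius $\vep_k$ covering $\mathbb{T}$ forces $\diam\,\mathbb{T}\le 2\vep_k$. For $N_k\ge2$ we have $\log(1+N_k^2)\le C\log N_k$. The first one or two levels, where $\vep_k\ge D/2$, are absorbed using $N(\vep/2)\ge2$ for $\vep<D$. This gives a contribution of order $D\sqrt{\log2}$, which is bounded by a constant multiple of the integral. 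Everything else is routine bookkeeping with the Orlicz triangle inequality.
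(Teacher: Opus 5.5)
Your chaining proof is correct. It is essentially the argument in van der Vaart--Wellner, Section 2.2.1, which the paper cites without reproducing: a finite $\psi_2$ maximal inequality, chaining with the triangle inequality for $\|\cdot\|_{\psi_2}$, a dyadic sum-to-integral comparison, and absorbing the coarsest level via $N(\varepsilon/2)\ge 2$ for $\varepsilon<\mathrm{diam}\,\mathbb{T}$.

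The one loose step is assuming a minimal $\varepsilon_{k_1}$-net of $\mathbb{T}$ contains your finite set (a minor point, and the reuse of the symbol $T_0$ for both the finite set and the level-$0$ net should be fixed). You do not need that assumption. Once $2\varepsilon_{k_1}$ is below the minimal separation of the finite set, each ball of radius $\varepsilon_{k_1}$ contains at most one of its points, so its cardinality is at most $N_{k_1}$. Linking each $t$ directly to $\pi_{k_1-1}t$ then costs the same. The cited source avoids the issue by using nested maximal separated sets, i.e.\ packing numbers, which is also the natural origin of the $\varepsilon/2$ in the statement, since the packing number at scale $\varepsilon$ is at most $N(\varepsilon/2,\mathbb{T},\mathsf{d})$.
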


In addition, we shall also use the following two auxiliary results from \cite{Chen-Zhang2018}.
\begin{lemma}\label{lem:3.2}
The family of random variables $\{E(u):=(e,Au):u\in \mathbb{R}^n\}$ forms a sub-Gaussian process with respect to 
\[
\sd(u,v)=\sigma \|Au-Av\|,
\qquad u,v\in\mathbb{R}^n.
\]
\end{lemma}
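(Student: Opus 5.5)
The claim is that $\{E(u)=(e,Au)\}$ satisfies the increment condition \eqref{e41} with $\sd(u,v)=\sigma\|Au-Av\|$. I would verify this directly from the sub-Gaussian moment generating function bound \eqref{e1}, using independence of the $e_i$. Linearity reduces everything to a single linear form: $E(u)-E(v)=(e,w)=\sum_{i=1}^n w_i e_i$ with $w:=A(u-v)$. I assume, as is implicit in model (M2) and in the data model \eqref{eq:data}, that $\mathbb{E}[e_i]=0$; otherwise one centers the $e_i$ and the same argument applies to $(e-\mathbb{E}e,Au)$.

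First, I compute the moment generating function of $\sum_i w_ie_i$. For any $t\in\mathbb{R}$, independence and \eqref{e1} applied with parameter $tw_i$ give
\[
\mathbb{E}\Big[\exp\Big(t\sum_{i=1}^n w_ie_i\Big)\Big]=\prod_{i=1}^n\mathbb{E}\big[\exp(tw_ie_i)\big]\le\prod_{i=1}^n\exp\Big(\tfrac12\sigma^2t^2w_i^2\Big)=\exp\Big(\tfrac12\sigma^2\|w\|^2t^2\Big).
\]
Hence $(e,w)$ is sub-Gaussian with variance proxy $\sigma^2\|w\|^2=\sd(u,v)^2$.

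Second, I apply the Chernoff bound. For $z>0$ and $t>0$, Markov's inequality gives
\[
\mathbb{P}\big((e,w)\ge z\big)\le e^{-tz}\exp\big(\tfrac12\sd(u,v)^2t^2\big).
\]
Optimizing at $t=z/\sd(u,v)^2$ yields the bound $\exp\big(-z^2/(2\sd(u,v)^2)\big)$. The same argument applied to $-(e,w)$ bounds the lower tail identically, and the union bound then gives \eqref{e41}.

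It remains to note that $\sd$ is a semi-metric: it is symmetric, satisfies the triangle inequality, and may vanish for $u\neq v$ when $A$ is singular. The degenerate case $\sd(u,v)=0$ is handled separately. Then $w=0$, so $E(u)=E(v)$ almost surely, and the probability in \eqref{e41} is $0$, consistent with reading the right-hand side as $0$.

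I expect no real obstacle here. The only care points are the centering convention and this degenerate case. One might also need separability for later use with Lemma~\ref{lem:3.1}. That follows because $u\mapsto E(u)$ is almost surely continuous (indeed linear) on $\mathbb{R}^n$, so the supremum can be taken over a countable dense subset such as $\mathbb{Q}^n$.
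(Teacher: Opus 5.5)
Your proof is correct and follows the same route as the paper: independence factorizes the moment generating function of $(e,A(u-v))$, and \eqref{e1} then gives variance proxy $\sigma^2\|Au-Av\|^2$. You go slightly further than the paper by carrying out the Chernoff optimization that converts this bound into the tail form \eqref{e41}, and by making the zero-mean and degenerate-increment cases explicit; the paper leaves all of these implicit.
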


\begin{proof}
For \(u,v\in\mathbb R^n\), set \(a=A(u-v)\). Then
\[
E(u)-E(v)=(e,a)=\sum_{i=1}^n e_i a_i .
\]
By independence and the sub-Gaussian assumption, for every \(\lambda\in\mathbb R\),
\[
\mathbb E\exp\{\lambda(E(u)-E(v))\}
=
\prod_{i=1}^n \mathbb E\exp(\lambda a_i e_i)
\le
\prod_{i=1}^n
\exp\left(\frac12\sigma^2\lambda^2a_i^2\right).
\]
Hence
\[
\mathbb E\exp\{\lambda(E(u)-E(v))\}
\le
\exp\left(
\frac12\sigma^2\lambda^2\|A(u-v)\|^2
\right).
\]
Thus \(E(u)-E(v)\) is sub-Gaussian with variance proxy
\(\sigma^2\|Au-Av\|^2\), which gives the asserted semi-distance
\[
\sd(u,v)=\sigma\|Au-Av\|.
\]
\end{proof}\pproof

\begin{lemma}\label{lem:3.3}
Let $C_1>0$ and $K_1>0$ be constants, and let $Z$ be a random variable satisfying
\begin{equation*}
    \mathbb{P}(|Z|>\beta (1+z))\le C_1\,{\rm exp} \Big(-\frac{z^2}{K_1^2}\Big)
\qquad \forall\,\beta>0,\ \forall\, z\ge 1.
\end{equation*}
Then there exists a constant $C(C_1,K_1)>0$, depending only on $C_1$ and $K_1$, such that
\[
\|Z\|_{\psi_2}\le C(C_1,K_1)\,\beta.
\]
\end{lemma}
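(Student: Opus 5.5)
The approach is to bound $\mathbb{E}[\psi_2(|Z|/(c\beta))]$ directly from the tail hypothesis, using the layer-cake formula, and then pick $c=C(C_1,K_1)$ large enough that this expectation is at most $1$. By definition \eqref{e2} this gives $\|Z\|_{\psi_2}\le c\beta$.

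First I would convert the hypothesis into a tail bound at level $t\beta$ for all $t\ge 2$. Setting $t=1+z$ with $z=t-1\ge 1$ gives
\[
\mathbb{P}(|Z|>t\beta)\le C_1\exp\Big(-\frac{(t-1)^2}{K_1^2}\Big)\le C_1\exp\Big(-\frac{t^2}{4K_1^2}\Big),\qquad t\ge 2,
\]
where the second inequality uses $t-1\ge t/2$ for $t\ge2$. For $t<2$ I use only the trivial bound $\mathbb{P}\le 1$. This is why one cannot simply take $c$ of order $K_1$: the region $|Z|\le 2\beta$ carries no decay information, so the constant must also absorb that range.

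Next, for $c\ge 2$ (to be fixed later) I write
\[
\mathbb{E}\Big[e^{Z^2/(c\beta)^2}-1\Big]=\int_0^\infty \frac{2s}{c^2}e^{s^2/c^2}\,\mathbb{P}(|Z|>s\beta)\,ds .
\]
I split the integral at $s=2$.

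On $[0,2]$, the integrand is at most $\frac{2s}{c^2}e^{4/c^2}$. The integral over this piece is therefore at most $\frac{4}{c^2}e^{4/c^2}$, which tends to $0$ as $c\to\infty$.

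On $[2,\infty)$, I insert the tail bound. The integrand becomes $\frac{2C_1 s}{c^2}\exp\big(-s^2(\tfrac{1}{4K_1^2}-\tfrac1{c^2})\big)$. Once $c^2\ge 8K_1^2$, the exponent rate is at least $\frac{1}{8K_1^2}$. The integral is then at most $\frac{2C_1}{c^2}\cdot 4K_1^2$, which also tends to $0$ as $c\to\infty$.

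Finally, I choose $c=C(C_1,K_1)\ge\max\{2,\sqrt8K_1\}$ large enough that the sum of the two pieces is at most $1$, for instance
\[
\frac{4e}{c^2}+\frac{8C_1K_1^2}{c^2}\le 1 .
\]
This gives $\|Z\|_{\psi_2}\le c\beta$ with $c$ depending only on $C_1$ and $K_1$.

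The only delicate step is the region where the hypothesis gives no information, namely $|Z|\le2\beta$. It is harmless because scaling by a large $c$ makes $\psi_2$ small there. The bookkeeping in the exponent rate, requiring $c^2>4K_1^2$ so that the Gaussian tail dominates $e^{s^2/c^2}$, is the main point to check.
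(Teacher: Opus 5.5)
Your proof is correct and complete. The layer-cake identity, the step $(t-1)^2\ge t^2/4$ for $t\ge 2$, the trivial bound on $[0,2]$ giving $e^{4/c^2}-1\le 4e/c^2$ for $c\ge 2$, and the Gaussian integral $\int_0^\infty s\,e^{-s^2/(8K_1^2)}\,ds=4K_1^2$ all check out. So any $c$ with $c^2\ge\max\{8K_1^2,\,4e+8C_1K_1^2\}$ gives $\mathbb{E}[\psi_2(|Z|/(c\beta))]\le 1$, and hence $\|Z\|_{\psi_2}\le c\beta$.

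The paper does not prove this lemma; it imports it from Chen--Zhang. There is therefore no in-paper argument to compare against. Your computation is the standard proof of this fact and fills that gap.

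You are right to treat $\beta$ as a fixed parameter. Read literally, the quantifier ``for all $\beta>0$'' would force $Z=0$ almost surely: let $\beta\downarrow 0$ to get $\mathbb{P}(|Z|>0)\le C_1e^{-z^2/K_1^2}$, then let $z\to\infty$. The fixed-$\beta$ reading is the meaningful one. It is also exactly how the lemma is used in the proof of Theorem~\ref{thm:3.1}, with $\beta=\lambda^{1/2}\rho_0$ and $C_1=4$.
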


\begin{lemma}\label{lem:entropy-linear-alg}
Let Assumption \ref{Assumption1} hold, and define
\(B:=\{Au:\|u\|_W\le 1\}.\)
Then there exists a constant \(C>0\), independent of \(\varepsilon\), such that
\begin{equation}\label{entropy-linear-alg}
\log N\bigl(\varepsilon,B,\|\cdot\|\bigr)\le C\varepsilon^{-2/\alpha}
\end{equation}
for all sufficiently small \(\varepsilon>0\).
\end{lemma}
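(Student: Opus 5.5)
We bound the covering numbers of the ellipsoid $B=\{Au:\|u\|_W\le 1\}$ using the generalized eigenbasis from Lemma \ref{lemma:eigv}. Write $u=\sum_k u_k\psi_k$ with $\sum_k u_k^2\le 1$. The vectors $\phi_k:=\rho_k^{-1/2}A\psi_k$ (for $\rho_k>0$) are orthonormal in $\mathbb{R}^n$, since $(A\psi_k,A\psi_l)=\rho_k\delta_{kl}$. Then $Au=\sum_k \rho_k^{1/2}u_k\phi_k$, so $B$ is isometric to the ellipsoid
\[
\mathcal{E}=\Big\{y\in\mathbb{R}^n:\ \sum_k y_k^2/\rho_k\le 1\Big\},
\]
whose semi-axes $a_k=\rho_k^{1/2}\le C^{1/2}k^{-\alpha/2}$ are given by Assumption \ref{Assumption1}. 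It therefore suffices to prove $\log N(\varepsilon,\mathcal{E},\|\cdot\|)\le C\varepsilon^{-2/\alpha}$ with $C$ independent of $n$. This uniformity in $n$ is the essential point.

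\textbf{Step 1 (truncation).} Fix $\varepsilon$ and let $m$ be the smallest integer with $a_{k}\le\varepsilon/2$ for all $k>m$. Taking $m=\lceil (4C/\varepsilon^2)^{1/\alpha}\rceil$ suffices, so $m\le C'\varepsilon^{-2/\alpha}$; if $m\ge n$, set $m=n$. For $y\in\mathcal{E}$ the tail satisfies
\[
\sum_{k>m}y_k^2=\sum_{k>m}a_k^2\,(y_k^2/a_k^2)\le \max_{k>m}a_k^2\le \varepsilon^2/4 .
\]
Hence the projection $P_m y=(y_1,\dots,y_m,0,\dots)$ lies within $\varepsilon/2$ of $y$. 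This step absorbs all coordinates beyond $m$ at no cost in entropy, which is why the bound does not depend on $n$.

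\textbf{Step 2 (finite-dimensional cover).} The set $P_m\mathcal{E}$ is contained in the Euclidean ball of radius $a_1\le C^{1/2}$ in $\mathbb{R}^m$. A standard volumetric argument shows that this ball is covered by at most $(1+4a_1/\varepsilon)^m$ balls of radius $\varepsilon/2$. Combining with Step 1 gives
\[
\log N(\varepsilon,B,\|\cdot\|)\le m\log(1+4a_1/\varepsilon)\le C\varepsilon^{-2/\alpha}\log(1/\varepsilon).
\]

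\textbf{Step 3 (removing the logarithm).} Step 2 leaves an extra factor $\log(1/\varepsilon)$, and removing it is the main obstacle. Partition the indices $k\le m$ into dyadic blocks $I_j=\{2^j\le k<2^{j+1}\}$, for $j=0,\dots,J$ with $2^J\approx m$. On $I_j$ the coordinates satisfy $\sum_{k\in I_j}y_k^2\le \max_{k\in I_j}a_k^2\le C2^{-j\alpha}$, so block $j$ lies in a ball of radius $r_j=C2^{-j\alpha/2}$ in dimension $2^j$. Cover block $j$ at scale $\varepsilon_j$, where $\varepsilon_j^2=c\,\varepsilon^2 2^{-(J-j)\delta}$ for some small $\delta>0$ and $c$ chosen so that $\sum_j\varepsilon_j^2\le\varepsilon^2/4$. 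The block-$j$ entropy is at most
\[
2^j\log(1+2r_j/\varepsilon_j)\lesssim 2^j\big[(J-j)(\alpha/2+\delta/2)+O(1)\big],
\]
using $r_j/\varepsilon\approx 2^{(J-j)\alpha/2}$. Summing over $j$ gives $\sum_j 2^j(J-j)\lesssim 2^J\approx m\lesssim \varepsilon^{-2/\alpha}$. Taking the product of the block covers, together with the tail bound from Step 1, yields an $\varepsilon$-cover of $B$. This proves \eqref{entropy-linear-alg} for small $\varepsilon$, with $C$ depending only on $C$ and $\alpha$ from Assumption \ref{Assumption1}.
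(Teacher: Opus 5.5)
Your proof is correct, but it handles the finite-dimensional part differently from the paper.

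\textbf{Common part.} The truncation step is the same in both. The paper also discards the indices with $a_k<\varepsilon/2$ at a cost of $\varepsilon/2$. It first enlarges $B$ to the ellipsoid $\widetilde B$ with semi-axes exactly $a_k=\sqrt{C_0}\,k^{-\alpha/2}$, whereas you work with $\rho_k^{1/2}$ directly.

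\textbf{Where the routes differ.} The paper never incurs the logarithm. It applies a volumetric covering bound to the $m$-dimensional ellipsoid itself, $N(\varepsilon/2,P_m\widetilde B)\le\prod_{k\le m}(1+4a_k/\varepsilon)$. Since $a_m\ge\varepsilon/2$ gives $a_k/\varepsilon\le C(m/k)^{\alpha/2}$, the product is at most $C^m(m^m/m!)^{\alpha/2}\le e^{Cm}$ by Stirling. The ellipsoid's volume already records the decay of the semi-axes, so your Steps 2 and 3 are replaced by a single estimate.

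Your dyadic-block argument uses only the covering bound for Euclidean balls and a product-of-covers argument. It recovers the decay through the graded scales $\varepsilon_j$. This is longer, but more elementary and self-contained, whereas the paper simply cites the ellipsoid bound as a standard volume comparison. It is also the usual multiscale technique, and it adapts readily to other semi-axis profiles. Step 2 is not needed once Step 3 is in place.

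\textbf{One small repair in Step 3.} If $m$ is clipped to $n$, then $\varepsilon\approx 2^{-J\alpha/2}$ no longer holds with $2^J\approx m$, so the estimate $r_j/\varepsilon\approx 2^{(J-j)\alpha/2}$ is not justified as written. Define $J$ from the unclipped value $\lceil(4C/\varepsilon^2)^{1/\alpha}\rceil$ and intersect the blocks with $\{1,\dots,n\}$. Every estimate then stays uniform in $n$, which is the uniformity you rightly emphasize.
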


\begin{proof}
Let \(\{\psi_k\}_{k=1}^n\) be a \(W\)-orthonormal family of eigenvectors associated with the generalized eigenvalue problem
\[
(A\psi,Au)=\rho(\psi,u)_W \qquad \forall\,u\in\mathbb{R}^n,
\]
so that
\[
(\psi_i,\psi_j)_W=\delta_{ij}, \qquad \rho_1\ge \rho_2\ge \cdots \ge \rho_n\ge 0.
\]
Then every \(u\in\mathbb{R}^n\) admits the expansion \(u=\sum_{k=1}^n u_k\psi_k,\) and \( u_k=(u,\psi_k)_W,\)
and hence
\[
\|u\|_W^2=\sum_{k=1}^n u_k^2.
\]
Moreover, by Lemma \ref{lemma:eigv},
\[
(A\psi_i,A\psi_j)=\rho_i(\psi_i,\psi_j)_W=\rho_i\delta_{ij}.
\]
Therefore, for each \(k\) with \(\rho_k>0\), the vectors \(\phi_k:=\rho_k^{-1/2}A\psi_k\) form an orthonormal system in \(\mathbb{R}^n\) with respect to the Euclidean inner product. Consequently,
\[
Au=\sum_{\rho_k>0}\sqrt{\rho_k}\,u_k\,\phi_k,
\qquad
\|Au\|^2=\sum_{\rho_k>0}\rho_k u_k^2.
\]
Denoting \(z_k:=\sqrt{\rho_k}\,u_k\), the set \(B\) can be represented as the ellipsoid
\[
B=
\left\{
\sum_{\rho_k>0} z_k\phi_k:
\sum_{\rho_k>0}\frac{z_k^2}{\rho_k}\le 1
\right\}.
\]

Now let \(C_0>0\) be the constant in Assumption \ref{Assumption1}, so that
\[
\rho_k\le C_0 k^{-\alpha}, \qquad k=1,\dots,n.
\]
Define
\[
\widetilde{\rho}_k:=C_0 k^{-\alpha},
\qquad
a_k:=\widetilde{\rho}_k^{1/2}=\sqrt{C_0}\,k^{-\alpha/2}.
\]
Since \(\rho_k\le \widetilde{\rho}_k\), we have the inclusion
\[
B\subset \widetilde{B}:=
\left\{
\sum_{k=1}^n z_k\phi_k:
\sum_{k=1}^n \frac{z_k^2}{\widetilde{\rho}_k}\le 1
\right\},
\]
and therefore
\[
N(\varepsilon,B,\|\cdot\|)\le N(\varepsilon,\widetilde{B},\|\cdot\|).
\]
Thus it suffices to estimate the covering number of \(\widetilde{B}\).

Fix \(\varepsilon>0\) sufficiently small, and choose \(m\in\{0,1,\dots,n\}\) such that \(a_{m+1}<\frac{\varepsilon}{2}\le a_m,\) with the convention \(a_{n+1}=0\) if \(m=n\). Let \(P_m\) denote the orthogonal projection onto
\(\mathrm{span}\{\phi_1,\dots,\phi_m\}\). For any \(x=\sum_{k=1}^n z_k\phi_k\in \widetilde{B},\) we have
\[
\sum_{k=1}^n \frac{z_k^2}{a_k^2}\le 1.
\]
Hence
\[
\|(I-P_m)x\|^2
=\sum_{k=m+1}^n z_k^2
\le a_{m+1}^2\sum_{k=m+1}^n \frac{z_k^2}{a_k^2}
\le a_{m+1}^2
<
\frac{\varepsilon^2}{4}.
\]
Thus every point of \(\widetilde{B}\) lies within distance \(\varepsilon/2\) of the truncated ellipsoid
\(P_m\widetilde{B}\), and therefore
\[
N(\varepsilon,\widetilde{B},\|\cdot\|)
\le
N\!\left(\frac{\varepsilon}{2},P_m\widetilde{B},\|\cdot\|\right).
\]

The set \(P_m\widetilde{B}\) is the \(m\)-dimensional ellipsoid with semi-axis lengths
\(a_1,\dots,a_m\). By a standard volume comparison argument,
\[
N\!\left(\delta,P_m\widetilde{B},\|\cdot\|\right)
\le
\prod_{k=1}^m\left(1+\frac{2a_k}{\delta}\right),
\qquad \delta>0.
\]
Taking \(\delta=\varepsilon/2\), we obtain
\[
N(\varepsilon,\widetilde{B},\|\cdot\|)
\le
\prod_{k=1}^m\left(1+\frac{4a_k}{\varepsilon}\right).
\]
Since \(a_k\ge a_m\ge \varepsilon/2\) for \(1\le k\le m\), it follows that \(1+\frac{4a_k}{\varepsilon}\le 5\,\frac{a_k}{\varepsilon},\)
and hence
\[
N(\varepsilon,\widetilde{B},\|\cdot\|)
\le
5^m\prod_{k=1}^m \frac{a_k}{\varepsilon}.
\]
Using \(a_k=\sqrt{C_0}\,k^{-\alpha/2}\) and \(\varepsilon/2\le a_m=\sqrt{C_0}\,m^{-\alpha/2}\), we find \(\varepsilon^{-1}\le C\,m^{\alpha/2},\)
so that
\[
\prod_{k=1}^m \frac{a_k}{\varepsilon}
=
\prod_{k=1}^m \frac{\sqrt{C_0}\,k^{-\alpha/2}}{\varepsilon}
\le
C^m \prod_{k=1}^m \left(\frac{m}{k}\right)^{\alpha/2}
=
C^m\left(\frac{m^m}{m!}\right)^{\alpha/2}.
\]
By Stirling's formula,
\[
m!\sim \sqrt{2\pi m}\left(\frac{m}{e}\right)^m,
\]
and therefore
\[
\left(\frac{m^m}{m!}\right)^{\alpha/2}\le e^{Cm}.
\]
Consequently,
\[
\log N(\varepsilon,\widetilde{B},\|\cdot\|)\le Cm.
\]

Finally, from \(a_{m+1}<\varepsilon/2\) we obtain \(\sqrt{C_0}(m+1)^{-\alpha/2}<\frac{\varepsilon}{2},\) which implies
\[
m\le C\varepsilon^{-2/\alpha}.
\]
Combining the above estimates yields
\[
\log N(\varepsilon,B,\|\cdot\|)
\le
\log N(\varepsilon,\widetilde{B},\|\cdot\|)
\le
C\varepsilon^{-2/\alpha}.
\]
This completes the proof.
\end{proof}\pproof

\begin{theorem}\label{thm:3.1}
Let \( \rho_0=\frac{1}{\sqrt n}\|x^*\|_{W}+\sigma n^{-1/2}, \) and let $x_{\lambda}\in \mathbb{R}^n$ be the solution of \eqref{p1}. If 
$\lambda^{\frac{1}{2}+\frac{1}{2\alpha}}=O(\sigma n^{-1/2}\rho_0^{-1}),$
then there exists a constant $C>0$, independent of
\(n,\lambda,\sigma\), such that
\[
\mathbb{P}\!\left(\frac{1}{\sqrt n}\|Ax_{\lambda}-Ax^*\| \ge \lambda^{1/2}\rho_0 z\right)\le 2e^{-Cz^2},
\qquad
\mathbb{P}\!\left(\frac{1}{\sqrt n}\|x_{\lambda}\|_{W}\ge \rho_0 z\right)\le 2e^{-Cz^2}.
\]
\end{theorem}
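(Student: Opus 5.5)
We would follow the peeling/chaining argument of \cite{Chen-Zhang2018}. The starting point is the variational identity \eqref{p7} tested with $u=x_\lambda-x^*$. It gives
\[
\lambda\|x_\lambda\|_W^2+\|A(x_\lambda-x^*)\|^2\le \lambda (x^*,x_\lambda)_W+(e,A(x_\lambda-x^*)),
\]
or, equivalently, in the energy-norm form \eqref{x4}. The whole task is a high-probability bound on the noise term. We normalize it by the semi-distance of Lemma \ref{lem:3.2}, so we study $(e,Av)$ for $v$ satisfying $\|v\|_W\le 1$ and $\|Av\|\le \delta$. Since $E(0)=0$, Lemma \ref{lem:3.1} applies to the index set $\mathbb{T}_\delta=\{v:\|v\|_W\le 1,\ \|Av\|\le\delta\}$ with $\sd(u,v)=\sigma\|Au-Av\|$. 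This set has diameter at most $2\sigma\delta$, and by Lemma \ref{lem:entropy-linear-alg} its entropy satisfies $\log N(\vep/2,\mathbb{T}_\delta,\sd)\le C(\vep/\sigma)^{-2/\alpha}$. The entropy integral is finite because $\alpha>1$ makes $1/\alpha<1$, and it gives
\[
\Big\|\sup_{v\in\mathbb{T}_\delta}|(e,Av)|\Big\|_{\psi_2}\le C\sigma\,\delta^{1-1/\alpha}.
\]
By homogeneity, for general $v$ the typical size of $|(e,Av)|$ is then $\sigma\|Av\|^{1-1/\alpha}\|v\|_W^{1/\alpha}$.

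The next step is a peeling argument over the ratio $\|Av\|/\|v\|_W$. Set $v=x_\lambda-x^*$ and split the space into dyadic shells $\{2^{j-1}\delta_0<\|Av\|/\|v\|_W\le 2^j\delta_0\}$. On each shell we use the $\psi_2$ bound together with \eqref{e3} and take a union bound. The shell tails form a geometric-type series, so we obtain
\[
|(e,Av)|\le C\sigma\, z\,\|Av\|^{1-1/\alpha}\|v\|_W^{1/\alpha}
\]
simultaneously for all $v$, on an event of probability at least $1-2e^{-Cz^2}$ (for $z\ge1$). The small-$\|Av\|$ part of the shells is absorbed using the normalization $\sigma n^{-1/2}\le\rho_0$.

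On this event we insert the bound into the energy inequality. Young's inequality with exponents $\frac{2\alpha}{\alpha-1}$-type splits $\sigma\|Av\|^{1-1/\alpha}\|v\|_W^{1/\alpha}$ into $\frac12\|Av\|^2+\frac{\lambda}{4}\|v\|_W^2+C\sigma^2\lambda^{-1/\alpha}z^2$. Combined with $\lambda(x^*,x_\lambda)_W\le\frac{\lambda}{2}\|x^*\|_W^2+\frac{\lambda}{2}\|x_\lambda\|_W^2$, this yields
\[
\|A(x_\lambda-x^*)\|^2+\lambda\|x_\lambda\|_W^2\le C\lambda\|x^*\|_W^2+C\sigma^2\lambda^{-1/\alpha}z^2.
\]
We then divide by $n$. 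The parameter choice $\lambda^{1+1/\alpha}\sim\sigma^2 n^{-1}\rho_0^{-2}$ gives $\sigma^2\lambda^{-1/\alpha}/n\le C\lambda\rho_0^2$. Together with $\frac1n\|x^*\|_W^2\le\rho_0^2$, this produces $\frac1{\sqrt n}\|Ax_\lambda-Ax^*\|\le C\lambda^{1/2}\rho_0 z$ and $\frac1{\sqrt n}\|x_\lambda\|_W\le C\rho_0 z$. Rescaling $z$ gives the two stated tail bounds. Alternatively, one can phrase the result as $\mathbb{P}(|Z|>\beta(1+z))$ bounds and convert them to $\psi_2$ form via Lemma \ref{lem:3.3}, then use \eqref{e3}.

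The main obstacle is the peeling step, that is, obtaining a bound for the supremum that holds uniformly over the unbounded ratio $\|Av\|/\|v\|_W$. Two points need care there. First, the entropy bound of Lemma \ref{lem:entropy-linear-alg} holds only for small $\vep$; for larger $\vep$ we would use the bound $\log N\le Cm$, which is finite since $\rho_1\le C$ keeps the set bounded. Second, the dyadic series must be summed so that the constants stay independent of $n$. If peeling turns out to be awkward, a fallback is to use the cruder fixed-scale bound on the single set $\{\lla v\rra_\lambda\le1\}$. Its $\sd$-entropy is still controlled by Lemma \ref{lem:entropy-linear-alg} after the substitution $\|v\|_W\le\lambda^{-1/2}$, and it gives $\|\sup(e,Av)/\lla v\rra_\lambda\|_{\psi_2}\le C\sigma\lambda^{-1/(2\alpha)}$ directly. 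We would then plug this into \eqref{x4}.
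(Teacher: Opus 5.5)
Your primary route has a gap at exactly the step you flagged as the main obstacle. Normalize $\|v\|_W=1$ and consider the shell $\|Av\|\approx 2^j\delta_0$. There Lemma~\ref{lem:3.1} controls $\sup|(e,Av)|$ only at the scale of the entropy integral, $C\sigma(2^j\delta_0)^{1-1/\alpha}$. So the probability that $|(e,Av)|>C\sigma z\|Av\|^{1-1/\alpha}$ somewhere on that shell is bounded by $2e^{-cz^2}$, with no decay in $j$. The ratio $\|Av\|/\|v\|_W$ can be as small as $\sqrt{\rho_n}\le Cn^{-\alpha/2}$, so the number of shells is not bounded independently of $n$. Hence the shell tails do not form a geometric series, and the union bound does not give $2e^{-Cz^2}$ with $n$-independent constants. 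The appeal to $\sigma n^{-1/2}\le\rho_0$ does not repair this.

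The uniform modulus-of-continuity bound you state is stronger than what Lemma~\ref{lem:3.1} delivers. Obtaining it would require a chaining tail bound whose deviation term lives at the scale of the diameter. Decay across shells appears only when the threshold on each shell is the quadratic term itself, and that is what the paper does. It peels jointly over $\|Av-Ax^*\|\in A_i$ and $\|v\|_W\in B_j$. On $F_{ij}$ it compares $2^{2(i-1)}\theta^2+\lambda 2^{2(j-1)}\rho^2$ with the shell bound $\sigma(2^j\rho)^{1/\alpha}(2^i\theta)^{1-1/\alpha}$, which grows more slowly.

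Separately, your Young constants need adjusting. As written, the $\lambda\|x_\lambda\|_W^2$ terms on the right sum to exactly $\lambda\|x_\lambda\|_W^2$ and cancel the left side, so the $W$-norm bound is lost. Using $\frac{\lambda}{8}\|v\|_W^2$ fixes this.

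Your fallback, however, is a complete proof, and it takes a genuinely different and shorter route than the paper's. Take $\mathbb{T}=\{v:\lla v\rra_\lambda\le 1\}$. Then $0\in\mathbb{T}$, $\diam(\mathbb{T},\sd)\le 2\sigma$, and $A\mathbb{T}\subset\lambda^{-1/2}\{Au:\|u\|_W\le 1\}$. Lemmas~\ref{lem:entropy-linear-alg} and~\ref{lem:3.1} therefore give $\|M\|_{\psi_2}\le C\sigma\lambda^{-1/(2\alpha)}$ for $M=\sup_v (e,Av)/\lla v\rra_\lambda$. This is the high-probability counterpart of \eqref{noise-bound}.

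Then \eqref{x4} yields two bounds: $\frac{1}{\sqrt n}\|A(x_\lambda-x^*)\|\le\lambda^{1/2}\rho_0+M/\sqrt n$ and $\frac{1}{\sqrt n}\|x_\lambda\|_W\le 2\rho_0+\lambda^{-1/2}M/\sqrt n$. Read the parameter relation as the lower bound $\lambda^{1+1/\alpha}\gtrsim\sigma^2 n^{-1}\rho_0^{-2}$. You do this, and the paper's ``direct computation'' also requires it despite the $O(\cdot)$ notation. This turns the two bounds into the two $\psi_2$ estimates, and the tails follow from \eqref{e3}, with no peeling and no need for Lemma~\ref{lem:3.3}.

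The paper's two-parameter peeling uses only the basic inequality \eqref{g1} coming from minimality. It is therefore the template that carries over to nonquadratic fidelity or penalty terms. For the present quadratic problem, where \eqref{p7} and hence \eqref{x4} are available, your energy-norm argument is the more economical one.
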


\begin{proof}
By \eqref{e3}, it suffices to prove
\begin{equation}\label{xx2}
\Big\|\frac{1}{\sqrt n}\|Ax_{\lambda}-Ax^*\|\Big\|_{\psi_2}\le C\lambda^{1/2}\rho_0,
\qquad
\Big\|\frac{1}{\sqrt n}\|x_{\lambda}\|_W\Big\|_{\psi_2}\le C\rho_0.
\end{equation}
Since the two estimates can be derived in the same manner, we prove only the first one by a peeling argument.

From \eqref{p1}, we have
\begin{equation}\label{g1}
\|Ax_{\lambda}-Ax^*\|^2+\lambda \|x_{\lambda}\|_{W}^2
\le 2(e,Ax_{\lambda}-Ax^*)+\lambda \|x^*\|_{W}^2.
\end{equation}
Moreover, by the definition of \(\rho_0\),
\begin{equation}\label{g1a}
\|x^*\|_W\le \sqrt n\,\rho_0,
\qquad
\lambda\|x^*\|_W^2\le \lambda n\rho_0^2.
\end{equation}

Let $\theta>0$ and $\rho>0$ be constants to be specified later, and define, for $i,j\ge 1$,
\begin{equation}\label{g2}
A_0=[0,\theta), \qquad A_i=[2^{i-1}\theta,2^i\theta),
\qquad
B_0=[0,\rho), \qquad B_j=[2^{j-1}\rho,2^j\rho).
\end{equation}
Choose \(\rho=2\sqrt n\,\rho_0 .\) For $i,j\ge 0$, set
\[
F_{ij}
=
\{v\in \mathbb{R}^n:\|Av-Ax^* \|\in A_i,\ \|v\|_W\in B_j\}.
\]
Then
\begin{equation}\label{g3}
\mathbb{P}(\|Ax_{\lambda}-Ax^*\|>\theta)
\le
\sum_{i=1}^{\infty}\sum_{j=0}^{\infty}
\mathbb{P}(x_{\lambda}\in F_{ij}).
\end{equation}

We next estimate $\mathbb{P}(x_{\lambda}\in F_{ij})$. For \(i,j\ge0\), define
\[
G_{ij}:=\bigcup_{k=0}^{i}\bigcup_{\ell=0}^{j}F_{k\ell}.
\]
Since \(\rho=2\sqrt n\,\rho_0\) and \(\|x^*\|_W\le \sqrt n\,\rho_0\), we have \(x^*\in F_{00}\subset G_{ij}.\)

By Lemma \ref{lem:3.2}, the process \(\{(e,Av):v\in \mathbb{R}^n\}\) is sub-Gaussian with respect to \(\sd(u,v)=\sigma\|Au-Av\|.\)
For any \(u,v\in G_{ij}\), by the definition of \(G_{ij}\),
\[
\|Au-Ax^*\|<2^i\theta,
\qquad
\|Av-Ax^*\|<2^i\theta.
\]
Therefore
\[
\|Au-Av\|
\le
\|Au-Ax^*\|+\|Av-Ax^*\|
<
2^{i+1}\theta.
\]
Consequently,
\begin{equation}\label{diamG}
\operatorname{diam}(G_{ij},\sd)
=
\sup_{u,v\in G_{ij}}\sigma\|Au-Av\|
\le
\sigma 2^{i+1}\theta .
\end{equation}

Using Lemma \(\ref{lem:3.1}\), \eqref{diamG}, and the definition of
\(\sd\), we obtain
\begin{align}
\Big\|\sup_{x\in F_{ij}} |(e,Ax-Ax^*)|\Big\|_{\psi_2}
&\le
\Big\|\sup_{x\in G_{ij}} |(e,Ax-Ax^*)|\Big\|_{\psi_2}  \notag \\
&\le
K\int_0^{\sigma \cdot 2^{i+1}\theta}
\sqrt{\log N\!\left(\frac{\vep}{2},G_{ij},\sd\right)}\,d\vep \notag \\
&=
K\int_0^{\sigma \cdot 2^{i+1}\theta}
\sqrt{\log N\!\left(\frac{\vep}{2\sigma},G_{ij},\|A\cdot\|\right)}\,d\vep.
\label{max1}
\end{align}
Since
\[
G_{ij}\subset \{v\in\mathbb R^n:\|v\|_W<2^j\rho\},
\]
we have
\[
A(G_{ij})
\subset
2^j\rho\,\{Au:\|u\|_W\le 1\}.
\]
Therefore, by Lemma \(\ref{lem:entropy-linear-alg}\),
\[
\log N\left(\frac{\varepsilon}{2\sigma},G_{ij},\|A\cdot\|\right)
\le
C\left(
\frac{2\sigma\,2^j\rho}{\varepsilon}
\right)^{2/\alpha}
\le
C\left(
\frac{\sigma\,2^j\rho}{\varepsilon}
\right)^{2/\alpha},
\]
where the factor \(2^{2/\alpha}\) has been absorbed into the generic
constant \(C\).

Substituting this into the previous estimate \eqref{max1} yields
\begin{align}
\Big\|\sup_{x\in F_{ij}} |(e,Ax-Ax^*)|\Big\|_{\psi_2}
&\le
K\sqrt{C}\int_0^{\sigma \cdot 2^{i+1}\theta}
\Big(\frac{\sigma 2^j\rho}{\vep}\Big)^{1/\alpha}\,d\vep \notag\\
&=
C\sigma (2^j\rho)^{1/\alpha}(2^i\theta)^{1-1/\alpha}.
\label{g5}
\end{align}
where we used \(\alpha>1\).

We now estimate \(\mathbb P(x_\lambda\in F_{ij})\). Suppose first that
\(i,j\ge1\). On the event \(x_\lambda\in F_{ij}\), \eqref{g1} and
\eqref{g1a} imply
\[
2^{2(i-1)}\theta^2+\lambda 2^{2(j-1)}\rho^2
\le
2\sup_{x\in F_{ij}} |(e,Ax-Ax^*)|
+\lambda n\rho_0^2 .
\]
Hence, by \eqref{g5} and the \(\psi_2\)-tail estimate \eqref{e3},
\begin{align}
\mathbb P(x_\lambda\in F_{ij})
&\le
2\exp\Bigg[
-\frac{1}{C\sigma^2}
\left(
\frac{
2^{2(i-1)}\theta^2
+\lambda 2^{2(j-1)}\rho^2
-\lambda n\rho_0^2
}{
(2^i\theta)^{1-1/\alpha}
(2^j\rho)^{1/\alpha}
}
\right)^2
\Bigg].
\label{pij1}
\end{align}

For $z\ge 1$, choose
\[
\theta^2=\lambda n\rho_0^2(1+z)^2,
\qquad
\rho=2\sqrt n\,\rho_0.
\]
Then, under the condition
\[
\lambda^{\frac12+\frac{1}{2\alpha}}=O(\sigma n^{-1/2}\rho_0^{-1}),
\]
a direct computation shows that for $i,j\ge 1$,
\begin{equation}\label{eq:Pij}
\mathbb{P}(x_{\lambda}\in F_{ij})
\le
2\exp\Bigg[
-C
\Bigg(
\frac{2^{2(i-1)}z(1+z)+2^{2(j-1)}}
{(2^i(1+z))^{1-1/\alpha}(2^j)^{1/\alpha}}
\Bigg)^2
\Bigg].
\end{equation}

To simplify this bound, we apply Young's inequality in the form
\[
ab\le \frac{a^p}{p}+\frac{b^q}{q},
\qquad
a,b>0,\quad p,q>1,\quad p^{-1}+q^{-1}=1,
\]
which gives
\[
(2^i(1+z))^{1-1/\alpha}(2^j)^{1/\alpha}
\le
C\big((1+z)2^i+2^j\big).
\]
Substituting this into \eqref{eq:Pij}, we arrive at
\[
\mathbb{P}(x_{\lambda}\in F_{ij})
\le
2\exp\big(-C(2^{2i}z^2+2^{2j})\big),
\qquad i,j\ge 1.
\]
The case \(i\ge1\), \(j=0\), is treated in the same way and gives
\[
\mathbb{P}(x_{\lambda}\in F_{i0})
\le
2\exp\big(-C2^{2i}z^2\big).
\]

Combining these bounds over all $i,j\ge 0$, and using the facts that for \(C \ge 1\),
\[
\sum_{j=1}^{\infty}e^{-C2^{2j}}\le e^{-C}<1,
\qquad
\sum_{i=1}^{\infty}e^{-C2^{2i}z^2}\le e^{-Cz^2},
\]
we obtain
\[
\sum_{i=1}^{\infty}\sum_{j=0}^{\infty}
\mathbb{P}(x_{\lambda}-x^*\in F_{ij})
\le
4e^{-Cz^2}.
\]
Thus, by \eqref{g3},
\begin{equation}\label{g7}
\mathbb{P}\big(\|Ax_{\lambda}-Ax^*\|>\lambda^{1/2}\sqrt n\,\rho_0(1+z)\big)
\le
4e^{-Cz^2},
\qquad z\ge 1.
\end{equation}
It then follows from Lemma \ref{lem:3.3} that
\[
\Big\|\frac{1}{\sqrt n}\|Ax_{\lambda}-Ax^*\|\Big\|_{\psi_2}
\le
C\lambda^{1/2}\rho_0,
\]
which is the first estimate in \eqref{xx2}. 

The \(W\)-norm estimate is obtained by the same peeling argument, now taking
\(\theta=\lambda^{1/2}\sqrt n\,\rho_0\) and
\(\rho=2\sqrt n\,\rho_0(1+z)\), which gives
\[
\mathbb P(\|x_\lambda\|_W>\rho)\le C e^{-cz^2},\qquad z\ge1,
\]
and therefore
\[
\left\|\frac1{\sqrt n}\|x_\lambda\|_W\right\|_{\psi_2}\le C\rho_0 .
\]
\end{proof}\pproof

Using the operator $B$ introduced in Lemma \ref{lem:2.2}, we can further derive a high-probability estimate in the corresponding weak topology.

\begin{cor}
With the notation of Theorem \ref{thm:3.1} and Lemma \ref{lem:2.2}, one has
\[
\mathbb{P}\!\left(
\frac{1}{\sqrt n}\|B(x_{\lambda}-x^*)\|
\ge
\lambda^{1/4}\rho_0 z
\right)
\le
2e^{-Cz^2}.
\]
\end{cor}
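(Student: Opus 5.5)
We combine the interpolation inequality \eqref{ccc0} from the proof of Lemma \ref{lem:2.2} with the two $\psi_2$-estimates \eqref{xx2} of Theorem \ref{thm:3.1}. By \eqref{ccc0},
\[
\frac1n\|B(x_\lambda-x^*)\|^2\le \frac{1}{\sqrt n}\|Ax_\lambda-Ax^*\|\cdot\frac{1}{\sqrt n}\|x_\lambda-x^*\|_W ,
\]
so it suffices to control the two factors on the right in $\psi_2$ or tail form. The first factor is handled directly by Theorem \ref{thm:3.1}: it exceeds $\lambda^{1/2}\rho_0 z$ with probability at most $2e^{-Cz^2}$. For the second factor, we use the triangle inequality and \eqref{g1a} to write $\frac1{\sqrt n}\|x_\lambda-x^*\|_W\le \frac1{\sqrt n}\|x_\lambda\|_W+\rho_0$. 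The second estimate of Theorem \ref{thm:3.1} then gives $\mathbb P(\frac1{\sqrt n}\|x_\lambda-x^*\|_W\ge \rho_0(1+z))\le 2e^{-Cz^2}$, and after adjusting constants for $z\ge1$ this becomes $\mathbb P(\frac1{\sqrt n}\|x_\lambda-x^*\|_W\ge 2\rho_0 z)\le 2e^{-Cz^2}$.

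Next we intersect the two good events by a union bound. Outside an event of probability at most $4e^{-Cz^2}$, we have
\[
\frac1n\|B(x_\lambda-x^*)\|^2\le 2\lambda^{1/2}\rho_0^2 z^2 ,
\]
that is, $\frac1{\sqrt n}\|B(x_\lambda-x^*)\|\le \sqrt2\,\lambda^{1/4}\rho_0 z$. Replacing $z$ by $z/\sqrt2$ changes only the constant $C$ in the exponent, so $\mathbb P(\frac1{\sqrt n}\|B(x_\lambda-x^*)\|\ge \lambda^{1/4}\rho_0 z)\le 4e^{-Cz^2}$.

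An equivalent route in Orlicz form: for nonnegative $X,Y$ one has $\|\sqrt{XY}\|_{\psi_2}\le \sqrt{\|X\|_{\psi_2}\|Y\|_{\psi_2}}$. This follows from $\sqrt{XY}\le \frac12(X/t+tY)$ with $t=\sqrt{\|X\|_{\psi_2}/\|Y\|_{\psi_2}}$ together with the triangle inequality for the $\psi_2$-norm. Feeding \eqref{xx2} into it gives $\|\frac1{\sqrt n}\|B(x_\lambda-x^*)\|\|_{\psi_2}\le C\lambda^{1/4}\rho_0$, and \eqref{e3} then yields the stated bound. This route has the advantage of producing the prefactor $2$ directly.

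The only real obstacle is bookkeeping of constants. The statement's prefactor is $2$, whereas the union-bound route naturally gives $4$. I will either pass through the $\psi_2$-norm as just described, or absorb the factor into $C$ for $z\ge1$; the range $z<1$ is trivial once $C$ is chosen small enough that $2e^{-C}\ge1$. A secondary point is that Theorem \ref{thm:3.1} controls $\|x_\lambda\|_W$ rather than $\|x_\lambda-x^*\|_W$. This is exactly why the triangle-inequality step with the deterministic bound $\|x^*\|_W\le\sqrt n\rho_0$ is needed.
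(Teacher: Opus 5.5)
Your proof is correct and is essentially the paper's argument: the paper applies the Young-split inequality \eqref{B_eq} (which is \eqref{ccc0} with the weight fixed at the scale $t\sim\lambda^{1/4}$) and then invokes Theorem \ref{thm:3.1}, which is what your $\psi_2$ route via $\sqrt{XY}\le\frac12(X/t+tY)$ does. Your triangle-inequality step, which turns the bound on $\|x_\lambda\|_W$ into one on $\|x_\lambda-x^*\|_W$ using $\|x^*\|_W\le\sqrt n\,\rho_0$, is a detail the paper leaves implicit, and the $\psi_2$ route is the cleaner way to obtain the prefactor $2$ (in the union-bound route the cutoff for absorbing $4$ into $2$ must depend on $C$; it cannot simply be taken as $z\ge1$).
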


\begin{proof}
By \eqref{B_eq},
\[
\|Bx^*-Bx_{\lambda}\|^2
\le
C\lambda^{1/2}\|x^*-x_{\lambda}\|_W^2
+
C\lambda^{-1/2}\|Ax^*-Ax_{\lambda}\|^2.
\]
The asserted estimate is therefore an immediate consequence of Theorem \ref{thm:3.1}.
\end{proof}\pproof

\begin{rem}
The above results suggest the a priori parameter choice
\begin{equation}\label{opt-parameter}
\lambda^{\frac{1}{2}+\frac{1}{2\alpha}}
=
O\!\left(
\sigma n^{-1/2}
\left(\frac{1}{\sqrt n}\|x^*\|_{W}+\sigma n^{-1/2}\right)^{-1}
\right).
\end{equation}
Under this choice, one obtains the high-probability bounds
\begin{align}
\mathbb{P}\!\left(
\frac{1}{\sqrt n}\|Ax_{\lambda}-Ax^*\|
\ge \lambda^{1/2}\rho_0 z
\right)
&\le 2e^{-Cz^2}, \label{eq:Pb-A} \\
\mathbb{P}\!\left(
\frac{1}{\sqrt n}\|B(x_{\lambda}-x^*)\|
\ge \lambda^{1/4}\rho_0 z
\right)
&\le 2e^{-Cz^2}. \label{eq:Pb-B}
\end{align}
\end{rem}

\section{Adaptive method to find the optimal regularization parameter}\label{sec:adaptive}

In practical applications, neither the exact solution nor, in some cases, the noise strength \(\sigma\) is available. Consequently, the a priori parameter-choice rule \eqref{opt-parameter_w} is generally not directly implementable. We also note that \eqref{opt-parameter_w} may be regarded as a practical approximation to the prior parameter-choice rule \eqref{opt-parameter} in Section \ref{sec:gaussian}, since the two differ only by the additive term \(\sigma n^{-1/2}\) in the denominator, which is typically negligible when \(n\) is large. Therefore, the two choices are asymptotically close. Motivated by this observation, we approximate the unknown solution norm \(\|x^*\|\) by \(\|x_\lambda\|\), and estimate the noise strength \(\sigma\) by the empirical residual \(\|Ax_{\lambda}-b\|\). This leads to the following adaptive procedure for selecting the regularization parameter using only the observed data.

\begin{algorithm}[!ht]
\caption{An adaptive method to find the optimal regularization parameter.}
\label{algo}
\begin{algorithmic}[1]
\Require Observation data $b$, number of observations $n$, tolerance $tol_{\lambda}$
\Ensure An approximate optimal regularization parameter $\lambda_k$ and the corresponding regularized solution $x_{\lambda_k}$

\State Set $k \gets 0$ and $\lambda_{-1} \gets +\infty$
\State Initialize the parameter $\lambda_0$ according to
\[
\lambda_0^{\frac{1}{2}+\frac{1}{2\alpha}} \gets n^{-1/2}
\]
\State Compute the initial regularized solution $x_{\lambda_0}$ by solving \eqref{p1} with $\lambda=\lambda_0$

\While{$|\lambda_k-\lambda_{k-1}| > tol_{\lambda}$}
    \State Given the current parameter $\lambda_k$, solve \eqref{p1} to obtain the regularized solution $x_{\lambda_k}$
    \State Estimate the next parameter by 
    \[
    \lambda_{k+1}^{\frac{1}{2}+\frac{1}{2\alpha}}
    \gets
    C\left(\frac{1}{\sqrt n}\|Ax_{\lambda_k}-b\|\right)
    n^{-1/2}
    \left(\frac{1}{\sqrt n}\|x_{\lambda_k}\|_{W}\right)^{-1}
    \]
    where $C > 0$ is a user–specified constant
    \State Update the iteration index: $k \gets k+1$
\EndWhile

\State \Return Parameter $\lambda_k$ and the corresponding regularized solution $x_{\lambda_k}$
\end{algorithmic}
\end{algorithm}

The proposed scheme is entirely data-driven and can be implemented in a straightforward manner. It starts from an explicit initial guess and then refines the parameter through a fixed-point type iteration until convergence. Since each update depends only on the current residual and the norm of the current regularized solution, the method is well suited for practical inverse problems in which the underlying noise characteristics are unavailable or difficult to estimate reliably.
\section{Numerical results}\label{sec:num}
In this section, we present several numerical experiments to complement the preceding theoretical analysis. Our main objective is to examine whether the observed numerical behavior is consistent with the theoretical predictions, including the near-optimality of the a priori parameter choice, the boundedness suggested by the error estimates, and the effectiveness of the adaptive parameter-selection algorithm. The noisy data are generated by
\[
b=y+\sigma \xi,
\]
where \(\xi=(\xi_i)_{i=1}^n\) consists of independent standard Gaussian random variables and
\[
\sigma=\delta \frac{1}{\sqrt{n}}\|Ax^\ast\|,
\]
with \(\delta>0\) denoting the prescribed relative noise level and \(\sigma\) representing the corresponding noise strength.

\begin{example}[A prototypical linear inverse problem]\label{nr:int}
We consider a prototypical linear inverse problem induced by a continuous kernel \(\kappa:\overline{\Omega}\times\overline{\Omega}\to\mathbb{R}
\),\quad \(\Omega=(0,1)\).
The associated integral operator \(K:L^{2}(\Omega)\to L^{2}(\Omega)\) is defined by
\begin{equation}
(Kx)(t)=\int_{\Omega}\kappa(t,s)x(s)\,ds.
\end{equation}
Since the kernel \(\kappa\) is continuous on the compact set \(\overline{\Omega}\times\overline{\Omega}\), the operator \(K\) is compact on \(L^{2}(\Omega)\). Its adjoint operator \(K^{*}:L^{2}(\Omega)\to L^{2}(\Omega)\) is given by \((T^{*}y)(s)=\int_{\Omega}\kappa(t,s)y(t)\,dt.\)

To obtain a finite-dimensional approximation, we partition the interval \(\overline{\Omega}\) into \(n\) uniform subintervals \(\Big[\frac{i-1}{n},\frac{i}{n}\Big], i=1,\dots,n,\) and apply a quadrature rule to discretize the integral equation. This yields the linear system
\[
Ax^{*}=y,
\]
where
\[
A=\frac{1}{n}\Bigl(\kappa\Bigl(\frac{j-1}{n},\frac{2i-1}{2n}\Bigr)\Bigr)_{j,i=1}^{n},
\qquad
x^{*}=\Bigl(x\Bigl(\frac{2j-1}{2n}\Bigr)\Bigr)_{j=1}^{n}.
\]

In the present example, we choose the kernel function and the exact solution as
\begin{equation}\label{kernel}
  \kappa(t,s)=
\begin{cases}
s(1-t), & s\le t,\\
t(1-s), & s\ge t,
\end{cases}
\end{equation}
and \(x(t)=-6t^{2}(1-t)(2-8t+7t^{2}),\) respectively, and set the weight matrix to \(W = I\). It is well known that the kernel \eqref{kernel} is the Green’s function of the one-dimensional Dirichlet Laplacian on \((0,1)\). Hence, the associated operator \(K\) has singular values satisfying \(s_j(K)\sim j^{-2}\). Therefore, for a standard discretization \(A\) of \(K\), the generalized eigenvalues \(\rho_k\) of \eqref{ccc} behave like the squared singular values of \(A\), so that \(\rho_k \sim k^{-4}.\) It is natural to take \(\alpha=4\) in Assumption \eqref{Assumption1} for this example.
\end{example}

\begin{itemize}
    \item Near-optimality of the a priori parameter choice (\ref{opt-parameter}) in terms of the output error \(\frac{1}{\sqrt{n}}\|Ax_{\lambda}-Ax^*\|\).
\end{itemize}
  With the noise level fixed at \(\delta=0.01\), we choose ten logarithmically equispaced values of \(\lambda\) in the interval \([10^{-10},10^{-4}]\). For each \(\lambda\), we compute the regularized solution \(x_\lambda\) and evaluate the corresponding output error \(\frac{1}{\sqrt n}\|Ax_\lambda-Ax^*\|.\) These values are then compared with the parameter predicted by rule \eqref{opt-parameter}, denoted by \(\lambda_{\mathrm{pred}}\), where the implicit constant in the \(O(\cdot)\) term is set equal to one. The same experiment is repeated for several discretization levels \(n\) in order to assess whether the proposed parameter choice remains effective across different problem sizes. As shown in Figure \ref{fig:parameter}, the parameter predicted by rule \eqref{opt-parameter} is nearly optimal, in the sense that it lies close to the minimizer of the output error.

\begin{figure}[hbt!]
	\centering
	\setlength{\tabcolsep}{0pt}
	\begin{tabular}{cc}
		\includegraphics[width=0.3\textwidth]{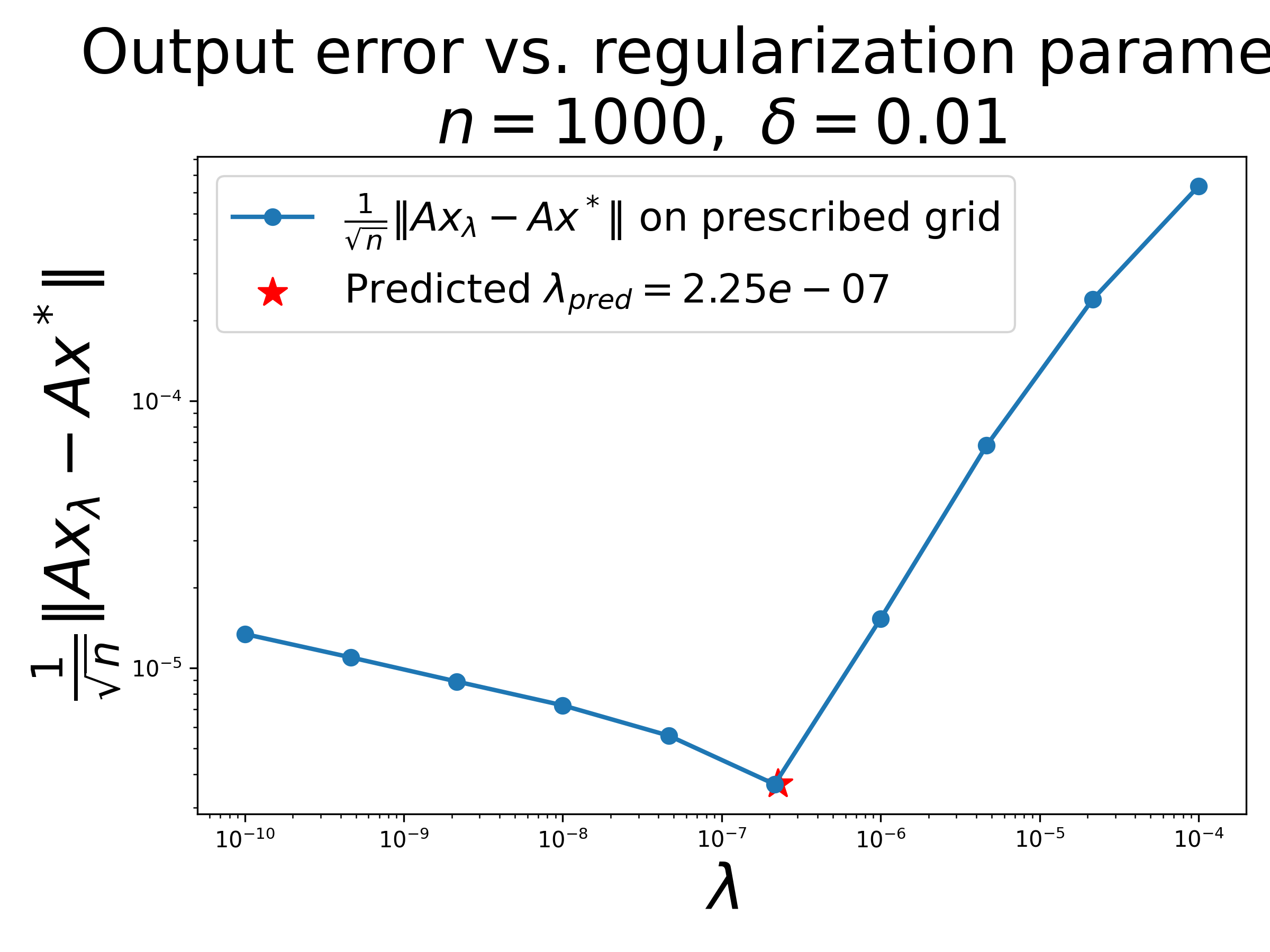} 
        \includegraphics[width=0.3\textwidth]{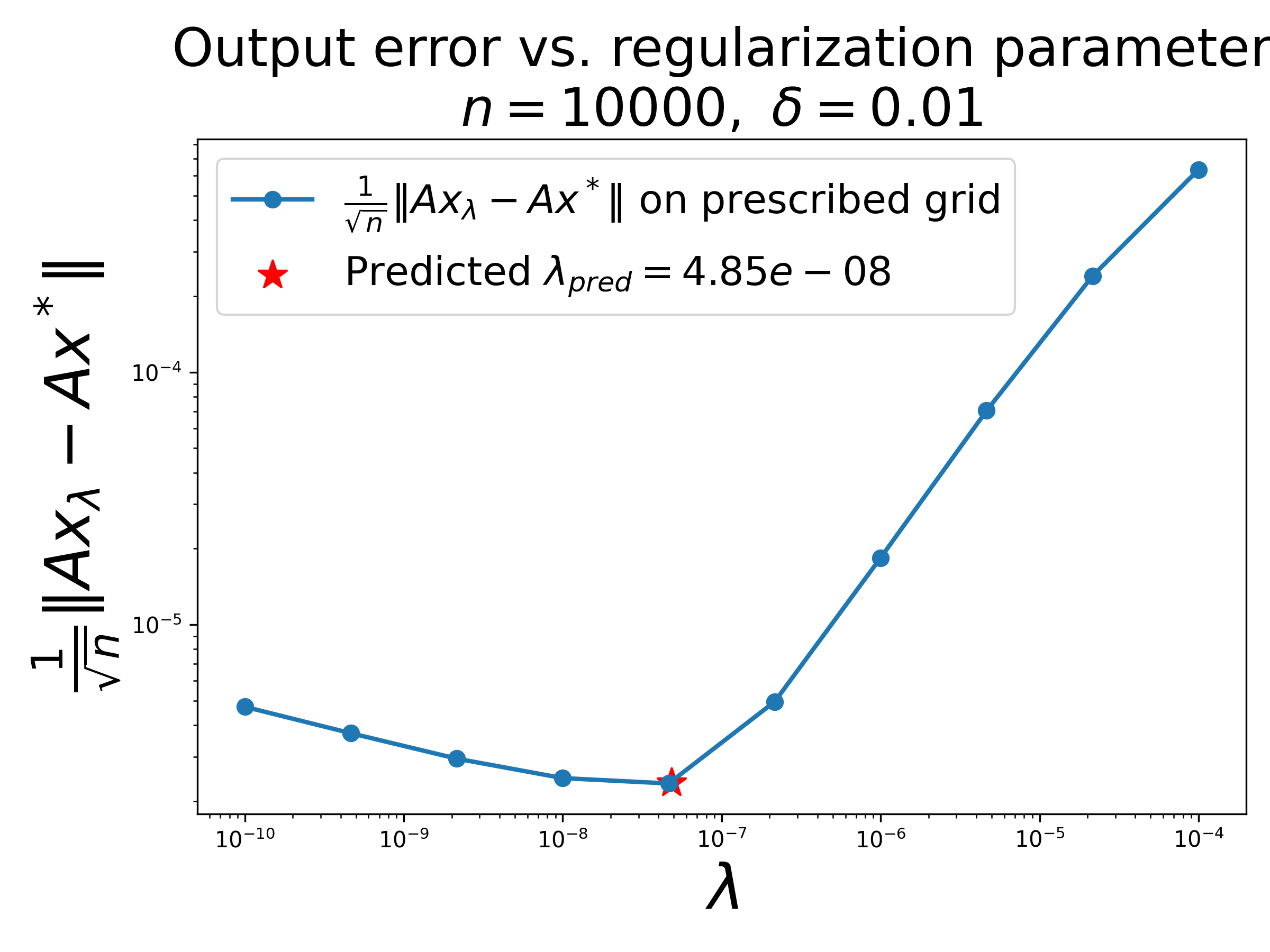}
        \includegraphics[width=0.3\textwidth]{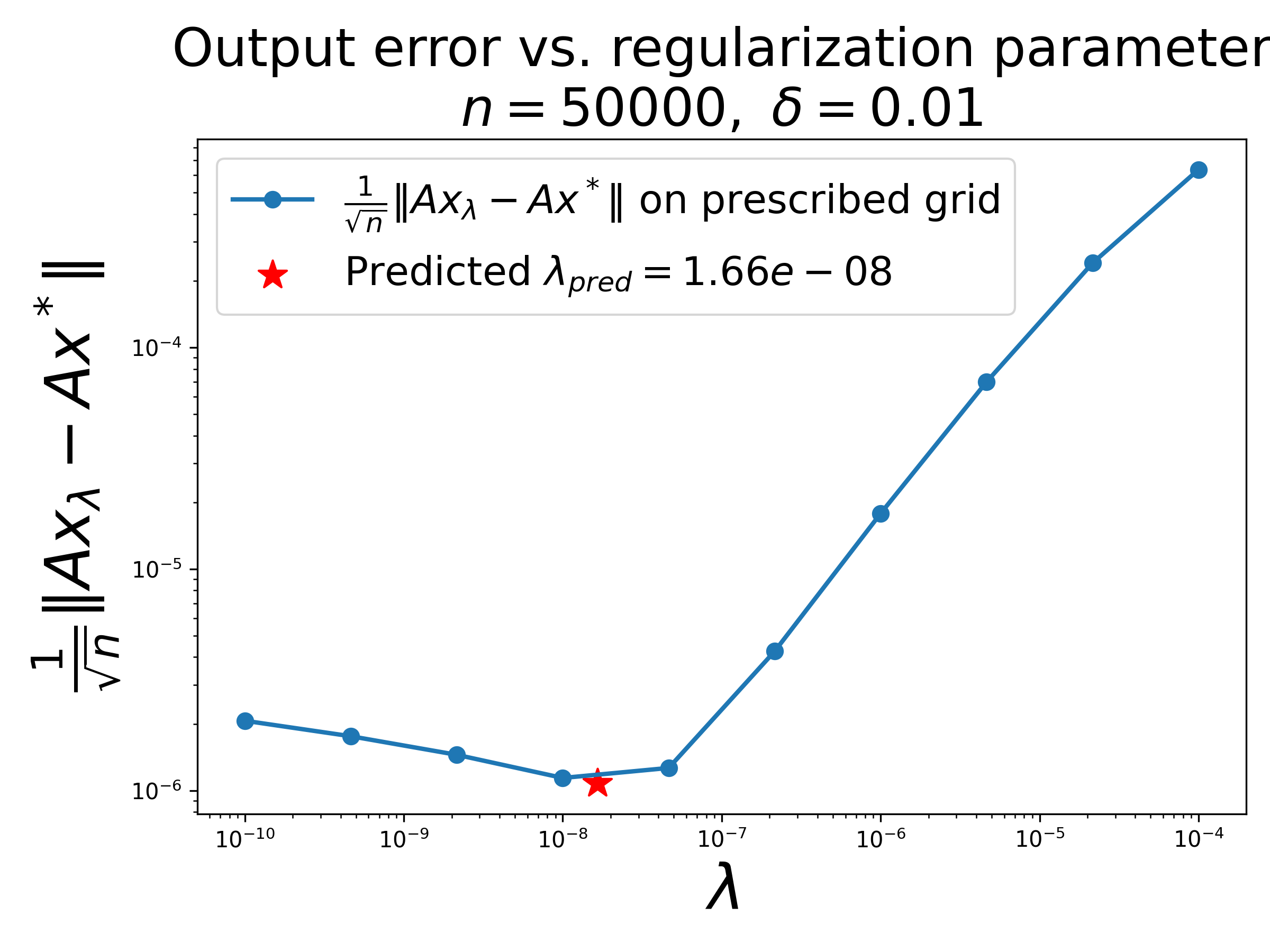} 
	\end{tabular}
	\caption{Output error \(\frac{1}{\sqrt{n}}\|Ax_{\lambda}-Ax^*\|\) versus the regularization parameter \(\lambda\) on a log--log scale for different discretization levels \(n\), illustrating the near-optimality of the parameter choice rule \eqref{opt-parameter}.}
	\label{fig:parameter}
\end{figure}

\begin{itemize}
    \item Numerically illustrate the boundedness predicted by \eqref{eq:bound_A} and \eqref{eq:bound_B}.
\end{itemize}

To examine the scaling predicted by the expectation bounds, we performed a Monte Carlo experiment for the discretization levels \(n\in\{1000, 2000, 4000, 8000, 16000\}\) and the noise levels \(\delta\in\{10^{-1},10^{-2},10^{-3},10^{-4}\}.\) For each pair \((n,\delta)\), the regularization parameter \(\lambda\) was chosen according to the predicted rule \eqref{opt-parameter}, where the implicit constant in the \(O(\cdot)\) term is set equal to one. The corresponding regularized solution \(x_\lambda\) was then computed, and the expectations\(\mathbb{E}\!\left[n^{-1/2}\|Ax_\lambda-Ax^*\|\right] \) and \(\mathbb{E}\!\left[n^{-1/2}\|B(x_\lambda-x^*)\|\right] \) with \(B=(A^\top A)^{1/4}\)
were approximated by sample averages over 2000 independent Gaussian noise realizations. The empirical means were then plotted against \(\lambda\) on log--log scales and fitted by least squares in logarithmic coordinates.  As shown in Figure \ref{fig:expectation}, the fitted slopes are \(0.498\) and \(0.250\), respectively, which are in excellent agreement with the predicted exponents \(1/2\) and \(1/4\). This provides numerical evidence for the bounds \eqref{eq:bound_A} and \eqref{eq:bound_B}.

\begin{figure}[hbt!]
	\centering
	\setlength{\tabcolsep}{0pt}
	\begin{tabular}{cc}
		\includegraphics[width=0.4\textwidth]{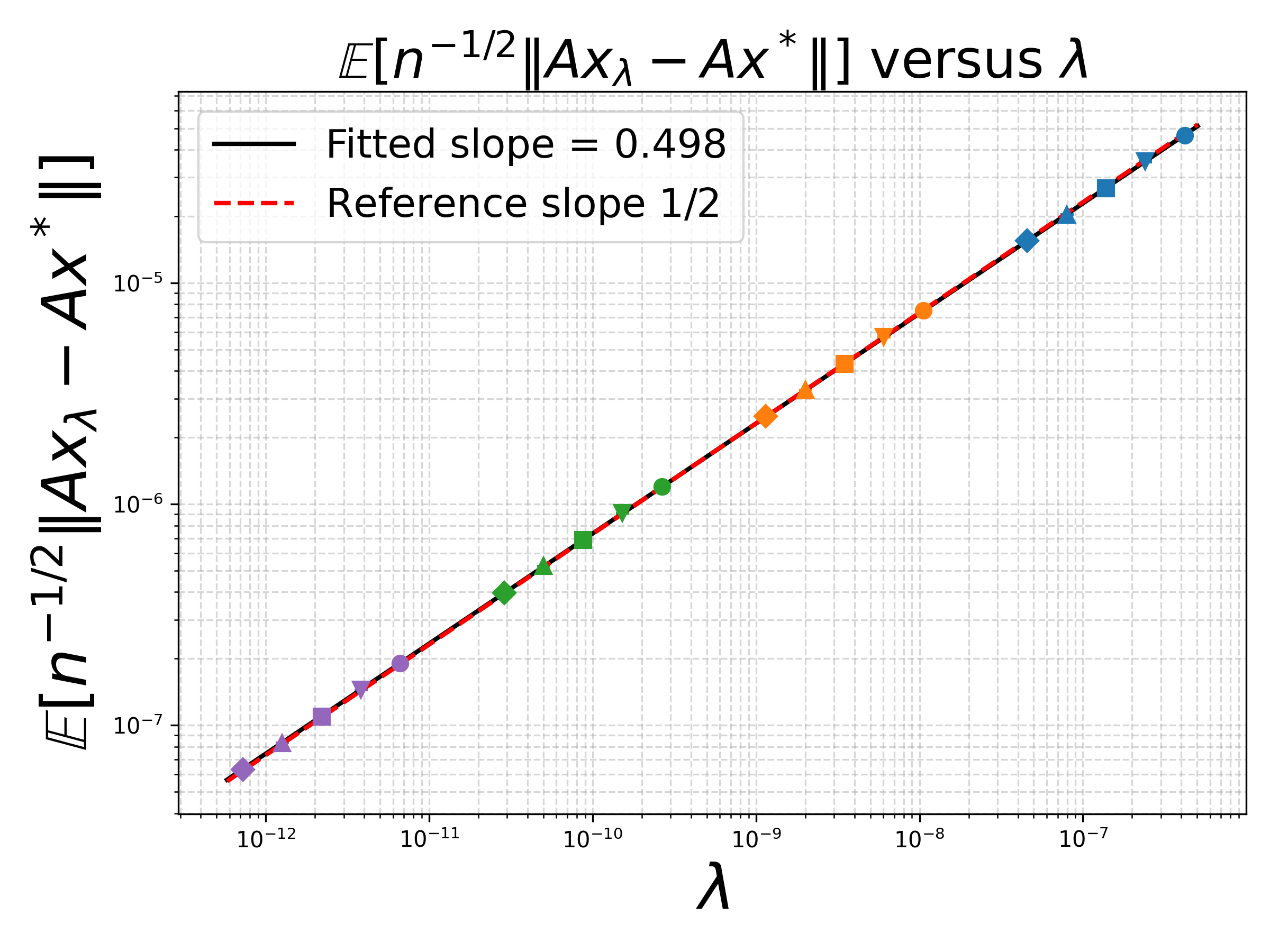} & \
		\includegraphics[width=0.4\textwidth]{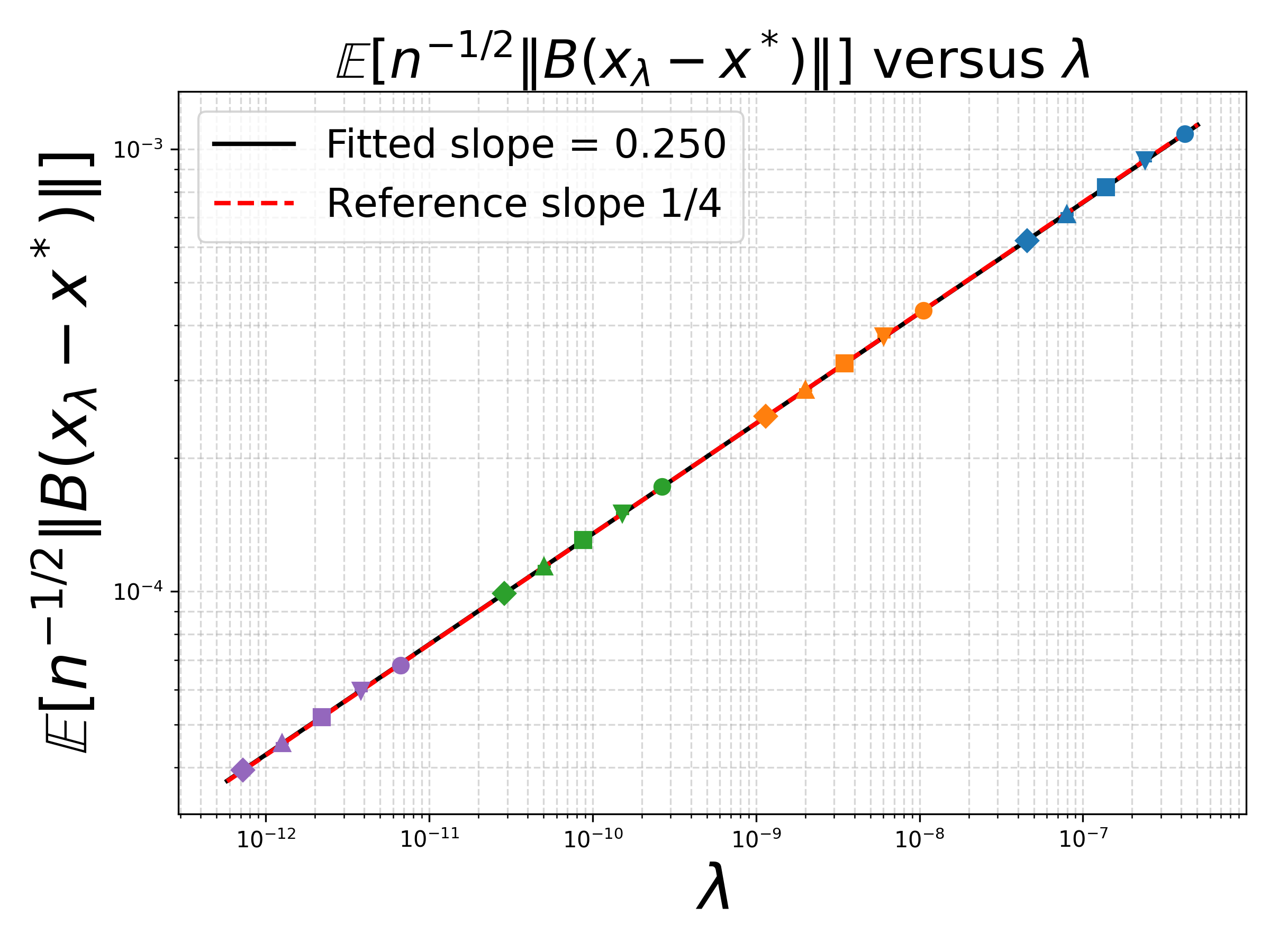} 
	\end{tabular}
	\caption{Log--log plots of \(\lambda\) versus \(\mathbb{E}[n^{-1/2}\|Ax_\lambda-Ax^*\|]\) (left) and \(\mathbb{E}[n^{-1/2}\|B(x_\lambda-x^*)\|]\) (right) with \(B=(A^\top A)^{1/4}\) for \(n\in\{1000, 2000, 4000, 8000, 16000\}\) and \(\delta\in\{10^{-1},10^{-2},10^{-3},10^{-4}\}\). The solid lines denote least-squares fits in logarithmic coordinates, with fitted slopes \(0.498\) and \(0.250\), while the dashed lines indicate the reference slopes \(1/2\) and \(1/4\), respectively. The observed agreement supports the predicted power-law behavior of the two expectation bounds.}
	\label{fig:expectation}
\end{figure}

\begin{itemize}
    \item Numerical verification of the effectiveness of Algorithm  \ref{algo}.
\end{itemize}
We consider the noise levels \(\delta\in\{0.1,0.01,0.001\}\) and the discretization sizes \(n\in\{2000,5000,10000\}\). Throughout the experiments, the stopping tolerance is set to \(\mathrm{tol}_\lambda=10^{-10}\), and Algorithm~\ref{algo} is employed to determine the regularization parameter. For each pair \((\delta,n)\), we report the final regularization parameter, the number of iterations, the relative reconstruction error \(\|x_{\lambda}-x^\ast\|/\|x^\ast\|,\) the relative error with respect to the exact data \(\|Ax_{\lambda}-Ax^*\|/\|Ax^*\|,\) and the relative residual with respect to the noisy data \(\|Ax_{\lambda}-b\|/\|b\|.\)

Table~\ref{tab:numerical-results} summarizes the numerical results for all tested noise levels and discretization sizes. For each \((\delta,n)\), it lists the noise strength \(\sigma\), the final regularization parameter \(\lambda_{\mathrm{final}}\), the number of iterations, the relative reconstruction error, and the relative data errors with respect to both the exact data \(y\) and the noisy data \(b\). The results show that \(\lambda_{\mathrm{final}}\) decreases as the noise level \(\delta\) decreases or the discretization size \(n\) increases. Meanwhile, both the relative reconstruction error \(\|x_{\lambda}-x^\ast\|/\|x^\ast\|,\) and the relative exact-data error \(\|Ax_\lambda-Ax^*\|/\|Ax^*\|\) become smaller for lower noise levels and finer discretizations, whereas the residual \(\|Ax_\lambda-b\|/\|b\|\) remains of the same order as the prescribed noise level \(\delta\). These results indicate that the proposed parameter-choice algorithm \ref{algo} produces stable regularization parameters and accurate reconstructions across all tested regimes.

Figure~\ref{fig:adaptive method} shows the convergence histories of the iterative sequence \(\lambda_k\). In all cases, \(\lambda_k\) decreases rapidly during the first few iterations and then stabilizes, which demonstrates the fast and stable convergence of the parameter-selection procedure. Figure~\ref{fig:recon_sol} compares the reconstructed solutions with the exact solution. It can be seen that the regularized solutions remain close to the exact solution for all tested values of \(n\), and that the reconstruction accuracy improves as the noise level \(\delta\) decreases and the discretization size \(n\) increases.

\begin{table}[htbp]
\centering
\caption{Numerical results for different noise levels $\delta$ and discretization sizes $n$.}
\label{tab:numerical-results}
\resizebox{\textwidth}{!}{%
\begin{tabular}{cccccccc}
\toprule
$\delta$ & $n$ & $\sigma$ & $\lambda_{\mathrm{final}}$ & iters & $\|x_{\lambda}-x^\ast\|/\|x^\ast\|$ & $\|Ax_{\lambda}-Ax^*\|/\|Ax^*\|$ & $\|Ax_{\lambda}-b\|/\|b\|$ \\
\midrule
0.1   & 2000  & $4.7117\times 10^{-4}$ & $3.188834\times 10^{-6}$ & 8 & $8.875881\times 10^{-2}$ & $8.543793\times 10^{-3}$ & $9.996602\times 10^{-2}$ \\
0.1   & 5000  & $4.7117\times 10^{-4}$ & $1.712181\times 10^{-6}$ & 7 & $7.155710\times 10^{-2}$ & $6.324914\times 10^{-3}$ & $9.960679\times 10^{-2}$ \\
0.1   & 10000 & $4.7117\times 10^{-4}$ & $1.072741\times 10^{-6}$ & 6 & $5.760931\times 10^{-2}$ & $5.193584\times 10^{-3}$ & $9.973962\times 10^{-2}$ \\
0.01  & 2000  & $4.7117\times 10^{-5}$ & $1.401075\times 10^{-7}$ & 7 & $3.691624\times 10^{-2}$ & $8.594145\times 10^{-4}$ & $9.894290\times 10^{-3}$ \\
0.01  & 5000  & $4.7117\times 10^{-5}$ & $7.613366\times 10^{-8}$ & 6 & $3.694457\times 10^{-2}$ & $7.594605\times 10^{-4}$ & $9.896408\times 10^{-3}$ \\
0.01  & 10000 & $4.7117\times 10^{-5}$ & $4.892315\times 10^{-8}$ & 5 & $2.807365\times 10^{-2}$ & $4.815258\times 10^{-4}$ & $1.005086\times 10^{-2}$ \\
0.001 & 2000  & $4.7117\times 10^{-6}$ & $6.494814\times 10^{-9}$ & 7 & $2.905355\times 10^{-2}$ & $1.347285\times 10^{-4}$ & $9.900827\times 10^{-4}$ \\
0.001 & 5000  & $4.7117\times 10^{-6}$ & $3.510672\times 10^{-9}$ & 6 & $2.880116\times 10^{-2}$ & $9.333476\times 10^{-5}$ & $9.869945\times 10^{-4}$ \\
0.001 & 10000 & $4.7117\times 10^{-6}$ & $2.214830\times 10^{-9}$ & 5 & $2.291959\times 10^{-2}$ & $5.399370\times 10^{-5}$ & $9.879263\times 10^{-4}$ \\
\bottomrule
\end{tabular}%
}
\end{table}

\begin{figure}[hbt!]
	\centering
	\setlength{\tabcolsep}{0pt}
	\begin{tabular}{cc}
		\includegraphics[width=0.48\textwidth]{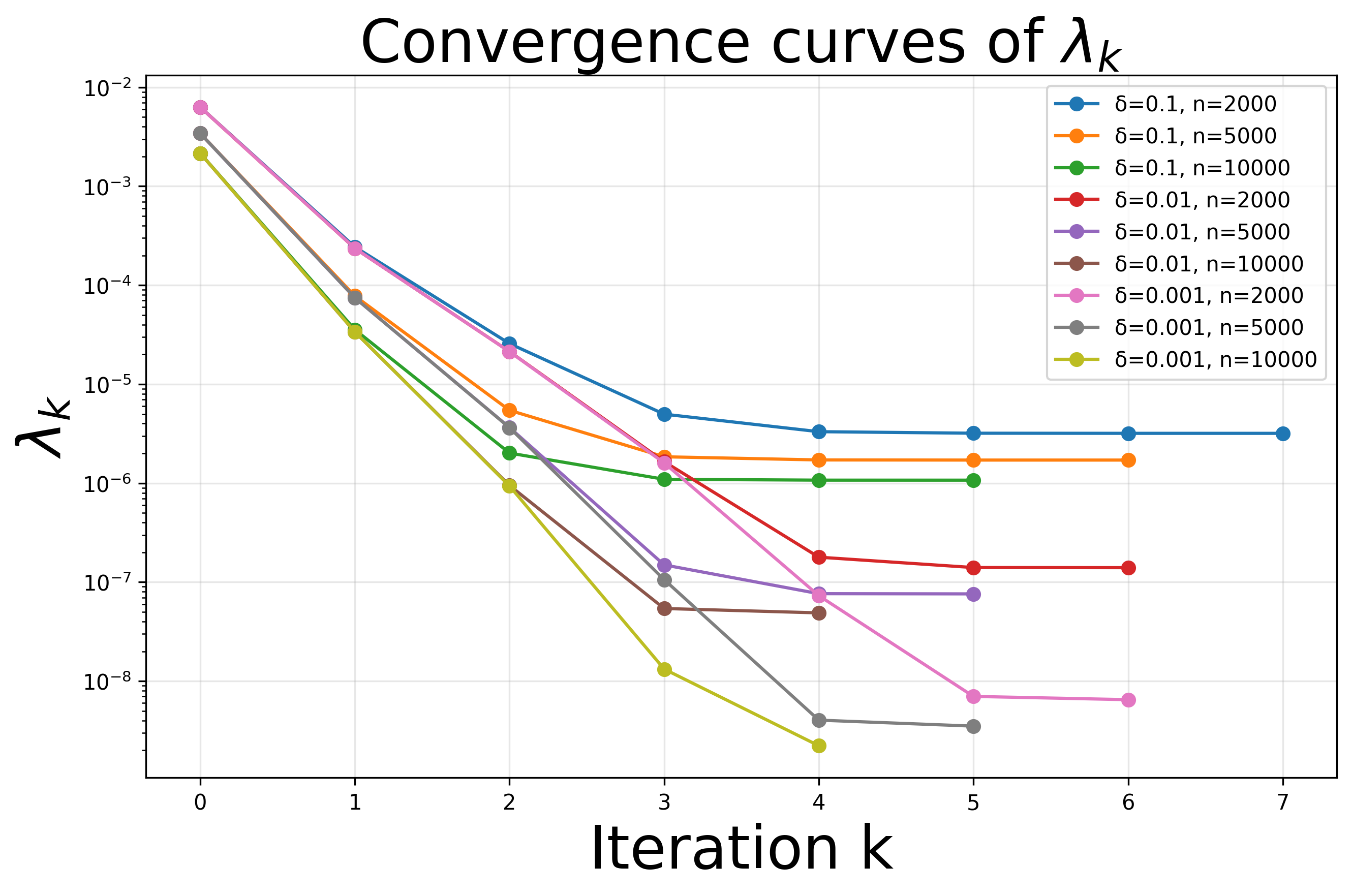} 
	\end{tabular}
	\caption{Convergence histories of the iteratively selected regularization parameter $\lambda_k$ for different noise levels $\delta\in\{0.1, 0.01, 0.001\}$ and discretization sizes $n=2000, 5000, 10000$. In all cases, $\lambda_k$ decreases rapidly during the first few iterations and then stabilizes, indicating fast and stable convergence of the parameter-selection scheme.}
	\label{fig:adaptive method}
\end{figure}

\begin{figure}[hbt!]
	\centering
	\setlength{\tabcolsep}{0pt}
	\begin{tabular}{cc}
		\includegraphics[width=0.3\textwidth]{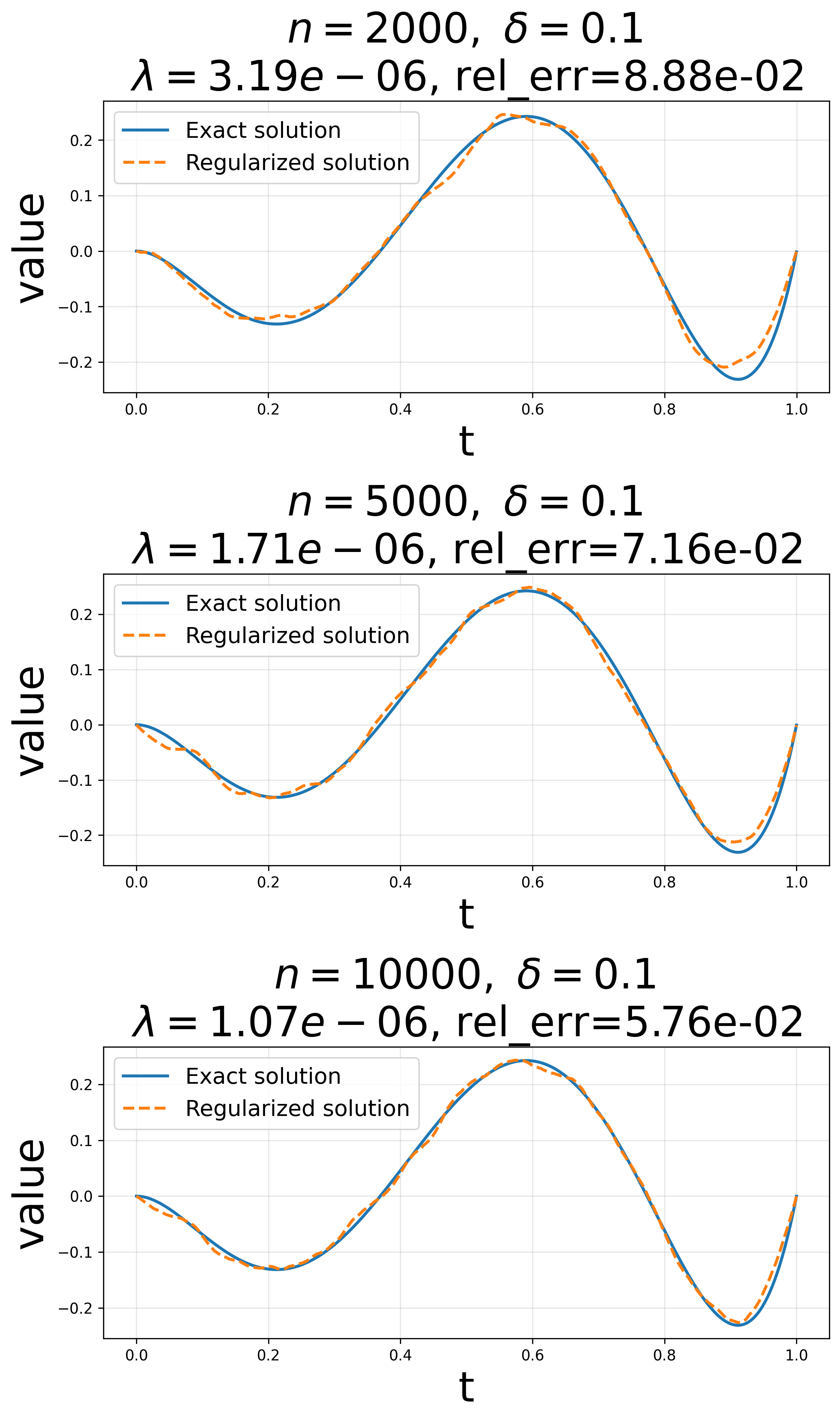} 
		\includegraphics[width=0.3\textwidth]{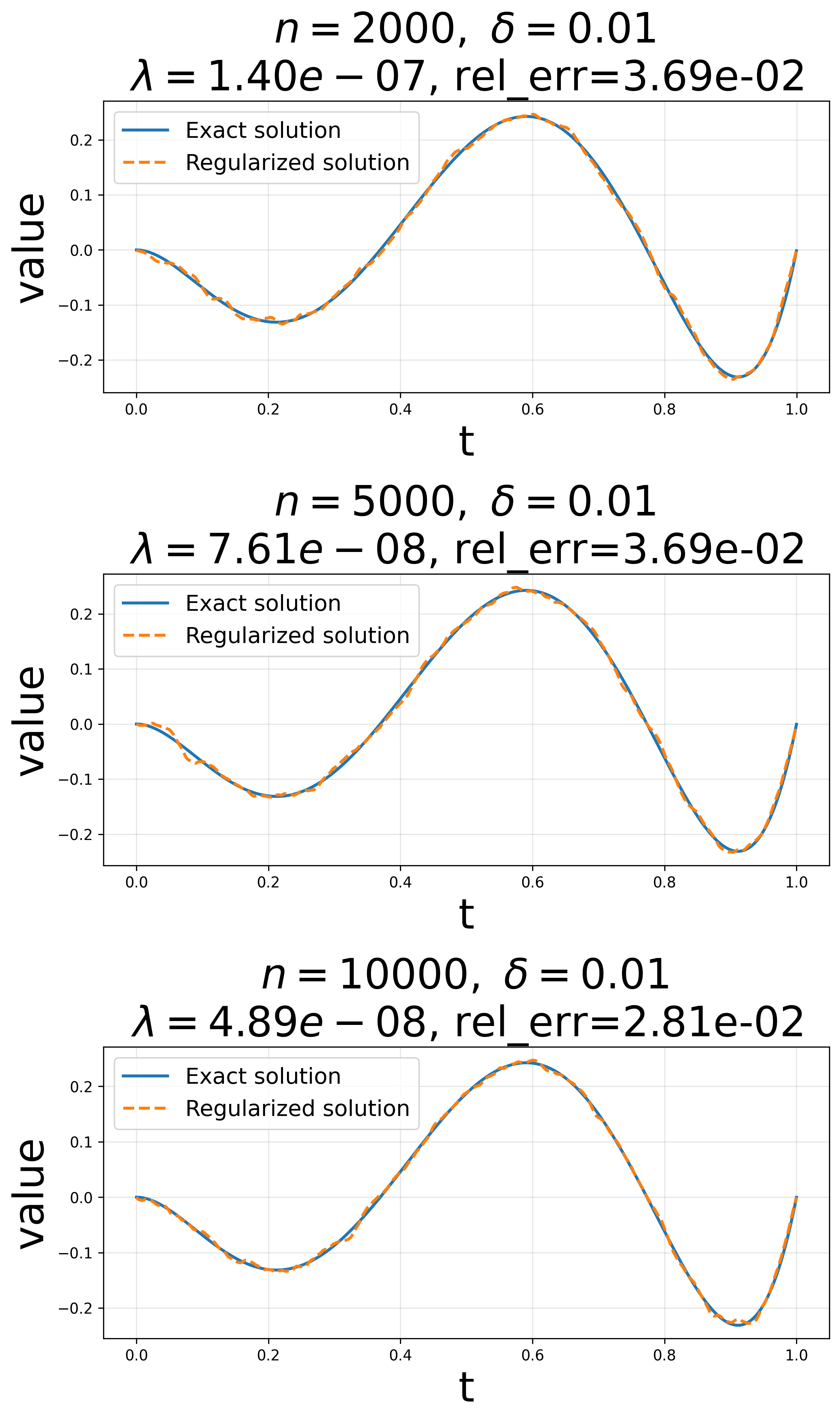} 
        \includegraphics[width=0.3\textwidth]{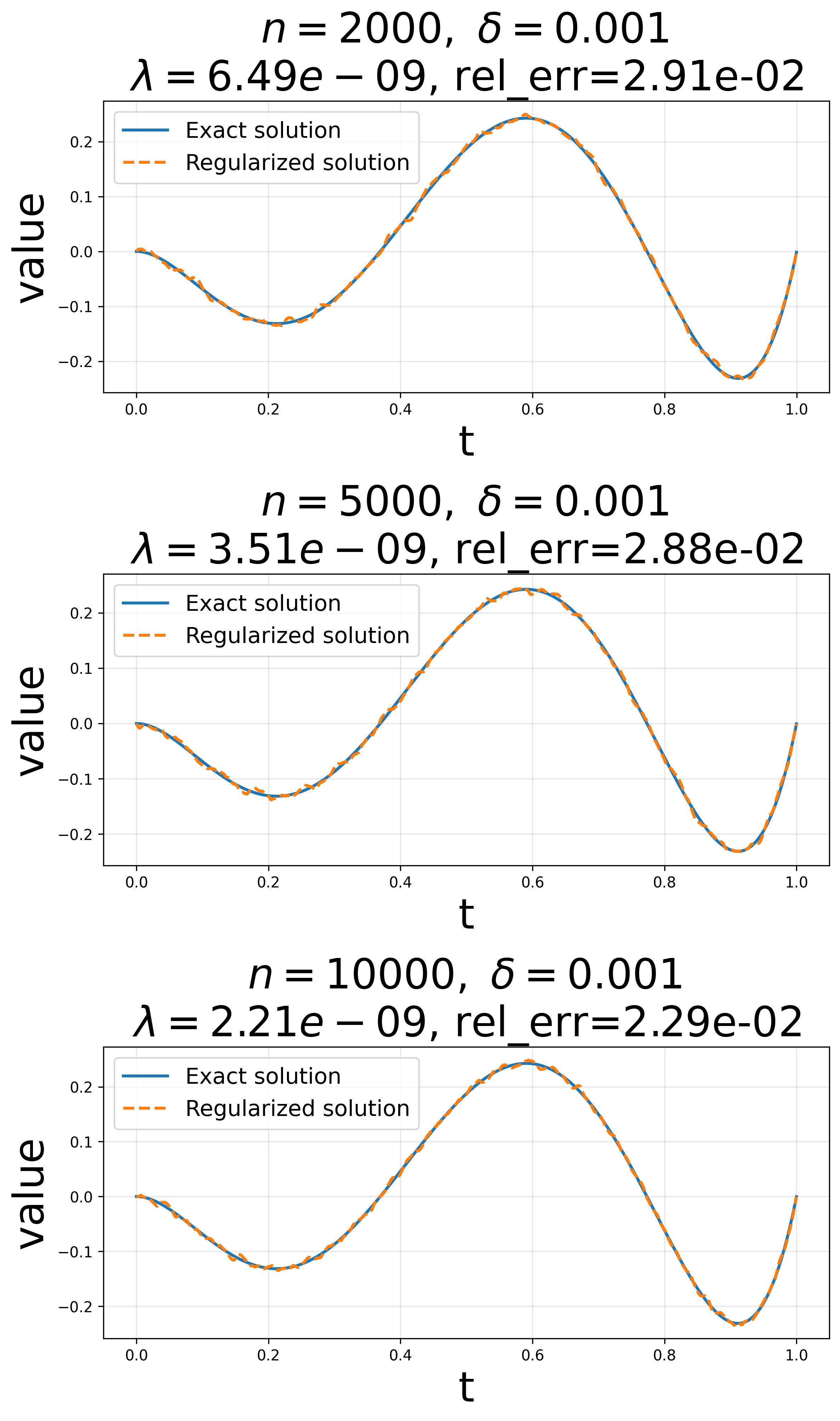} 
	\end{tabular}
	\caption{Reconstruction results at three noise levels, namely $\delta=0.001$, $\delta=0.01$, and $\delta=0.1$. For each noise level, the exact solution (solid blue) is compared with the regularized solution (dashed orange) for $n=2000$, $5000$, and $10000$. The reported values of $\lambda$ are the final regularization parameters obtained by the Algorithm \ref{algo}, and \texttt{rel\_err} denotes the relative reconstruction error $\|x_\lambda-x^\ast\|/\|x^\ast\|$. The plots show that the reconstructed solution remains close to the exact solution over all tested discretization levels \(n\), while the reconstruction accuracy improves as the noise level \(\delta\) decreases and as $n$ increases.}
	\label{fig:recon_sol}
\end{figure}

\begin{example}
This example is based on the IR Tools package \cite{Gazzola2019IRTools}. We consider the blurring of the Hubble space telescope image included in the package at two resolutions, namely \(100\times 100\) and \(180\times 180\) pixels. These correspond to problem sizes \(n=10000\) and \(n=32400\), respectively. Zero boundary conditions are imposed, which are natural here because the image is set against a black background.
\end{example}

\begin{itemize}
    \item Numerical validation of the spectral decay assumption and estimation of \(\alpha\).
\end{itemize}

To examine the spectral decay assumption,  we consider the speckle deblurring problem generated by \texttt{PRblurspeckle} at the two image resolutions introduced above. Since the blurring matrix \(A\)  generated by IR Tools is  a \texttt{psfMatrix} object, we convert it into an explicit matrix \(A\in\mathbb{R}^{n\times n}\), take \(W=I\), and compute the positive generalized eigenvalues \(\rho_k\) of \(A^{\top}A\psi=\rho W\psi\). Ordering \(\rho_k\) decreasingly, we estimate the decay exponent from the log-log fit \(\log \rho_k\approx \log C-\alpha\log k\) over the intermediate range \(k=6,\dots,\min(400,\lfloor m/2\rfloor)\), where \(m\) is the number of retained positive eigenvalues. The resulting \(\hat\alpha\), together with the envelope \(C_{\mathrm{upper}}k^{-\hat\alpha}\), where \(C_{\mathrm{upper}}=\max_{1\le k\le m}\rho_k k^{\hat\alpha}\), is then used to assess the plausibility of the algebraic decay condition \(\rho_k \leq Ck^{-\alpha}\). Figure \ref{ex2fig:alpha} shows the decay of the generalized eigenvalues \({\rho_k}\) for two discretization levels, \(n=10000\) and \(n=32400\), with corresponding fitted exponents \(\hat{\alpha}=1.0933\) and \(\hat{\alpha}=1.5336\), respectively.

\begin{figure}[hbt!]
    \centering
    \begin{subfigure}{0.4\textwidth}
        \centering
        \includegraphics[width=\linewidth]{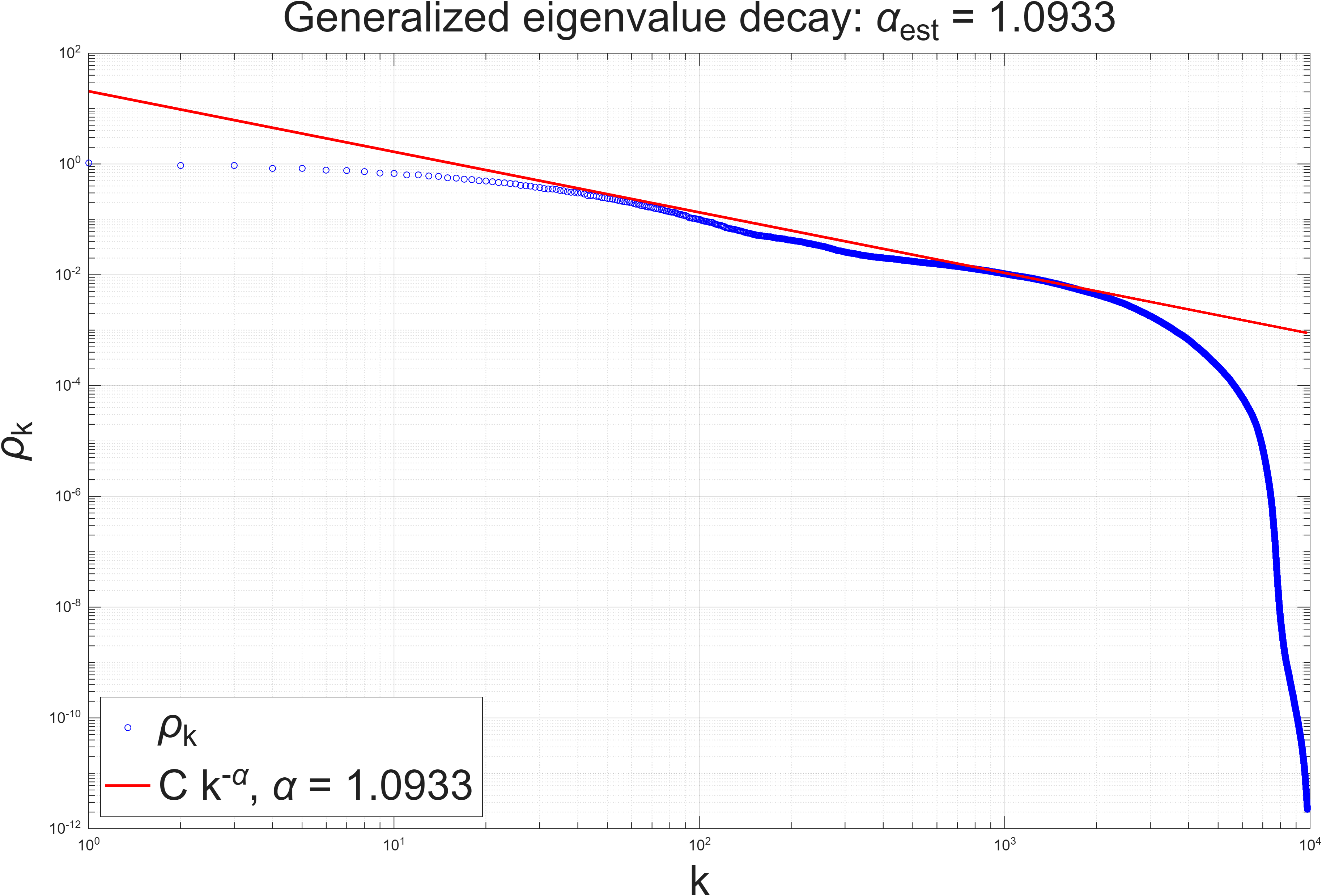}
    \end{subfigure}
    \begin{subfigure}{0.4\textwidth}
        \centering
        \includegraphics[width=\linewidth]{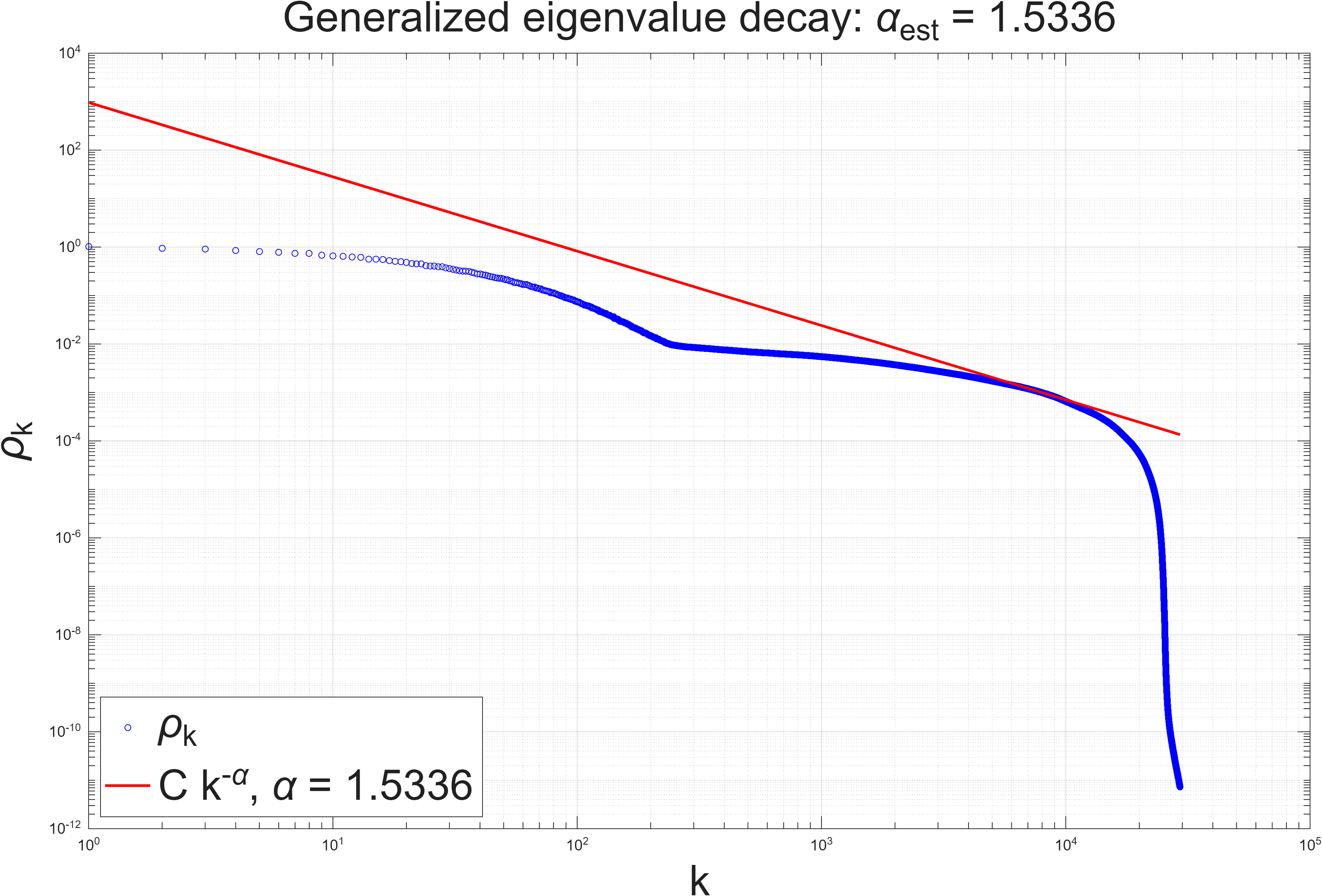}
    \end{subfigure}
    \caption{Decay of the generalized eigenvalues \({\rho_k}\) versus the index \(k\) on a double-logarithmic scale for two discretization levels: \(n=10000\) (left) and \(n=32400\) (right).} 
    \label{ex2fig:alpha}
\end{figure}

\begin{itemize}
    \item Near-optimality of the a priori parameter choice (\ref{opt-parameter}) in terms of the output error.
\end{itemize}

As in Example \ref{nr:int}, the experiment is carried out by computing the regularized solutions over a prescribed parameter grid and comparing the resulting output errors with the theoretically predicted parameter. The figure \ref{ex2_fig:parameter}  shows the same overall behavior of the output error \(\frac{1}{\sqrt{n}}\|Ax_\lambda-Ax^\ast\|\) with respect to the regularization parameter \(\lambda\). In both cases, the error decreases as \(\lambda\) increases from very small values, attains its minimum in an intermediate range, and then rises rapidly for larger \(\lambda\). The parameter predicted by the a priori choice rule \eqref{opt-parameter}, with the implicit constant in the \(O(\cdot)\) term fixed at \(6\), is indicated by the red star and is seen to lie close to the minimum region of the error curve.

\begin{figure}[hbt!]
    \centering
    \begin{subfigure}{0.4\textwidth}
        \centering
        \includegraphics[width=\linewidth]{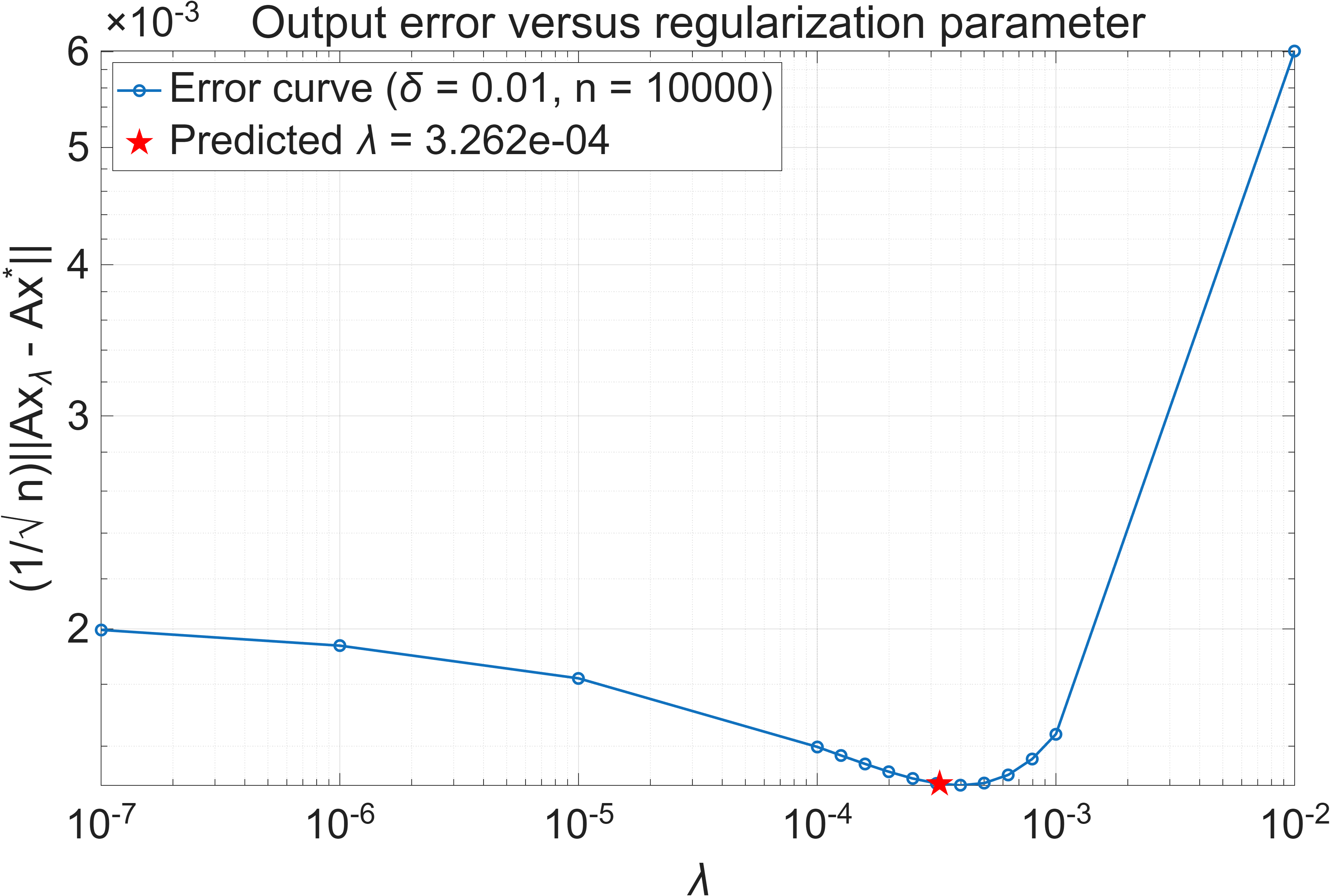}
    \end{subfigure}
    \begin{subfigure}{0.4\textwidth}
        \centering
        \includegraphics[width=\linewidth]{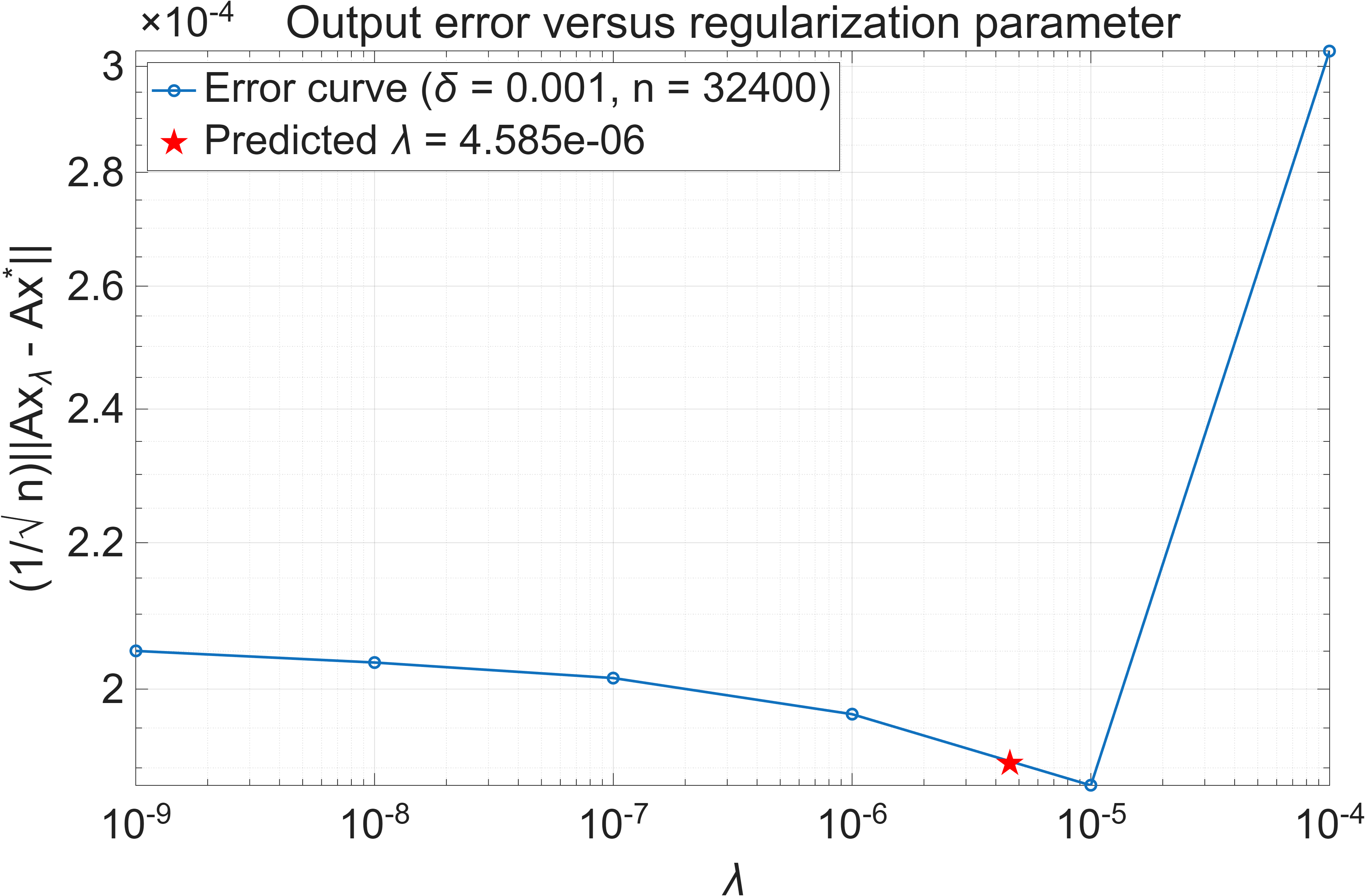}
    \end{subfigure}
    \caption{Comparison of output errors \(\frac{1}{\sqrt n}\|Ax_\lambda-Ax^\ast\|\)  versus the regularization parameter \(\lambda\) for two representative settings: \(\delta=0.01\), \(n=10000\), and \(\delta=0.001\), \(n=32400\).  In both cases, the error first decreases, then attains a minimum in an intermediate parameter range, and finally increases rapidly for larger \(\lambda\). The predicted parameters \(\lambda_{\mathrm{pred}}\), marked by red stars, lie close to the minima of the corresponding error curves, indicating that the a priori parameter-choice rule \eqref{opt-parameter} is nearly optimal.} 
    \label{ex2_fig:parameter}
\end{figure}

\begin{itemize}
    \item Empirical distribution of the output error
\end{itemize}

To examine the statistical behavior of the output error, we conduct a Monte Carlo experiment with \(\delta=0.01\) and \(n=10000\), using the parameter rule \eqref{opt-parameter} to determine a nearly optimal \(\lambda\). We generate \(10000\) independent noise realizations and record the empirical errors \(\frac{1}{\sqrt n}\|Ax_\lambda-Ax^\ast\|.\) The histogram in Figure \ref{ex2_fig:histogram_qq} shows a strong concentration of the errors, while the Q-Q plot is close to linear except for slight tail deviations.  This indicates that the empirical distribution of the output error is well approximated by a Gaussian law and exhibits an approximately exponential, more precisely sub-Gaussian, tail decay. These observations provide qualitative support for the concentration estimate \eqref{eq:Pb-A}. 

\begin{figure}[hbt!]
    \centering
    \begin{subfigure}{0.4\textwidth}
        \centering
        \includegraphics[width=\linewidth]{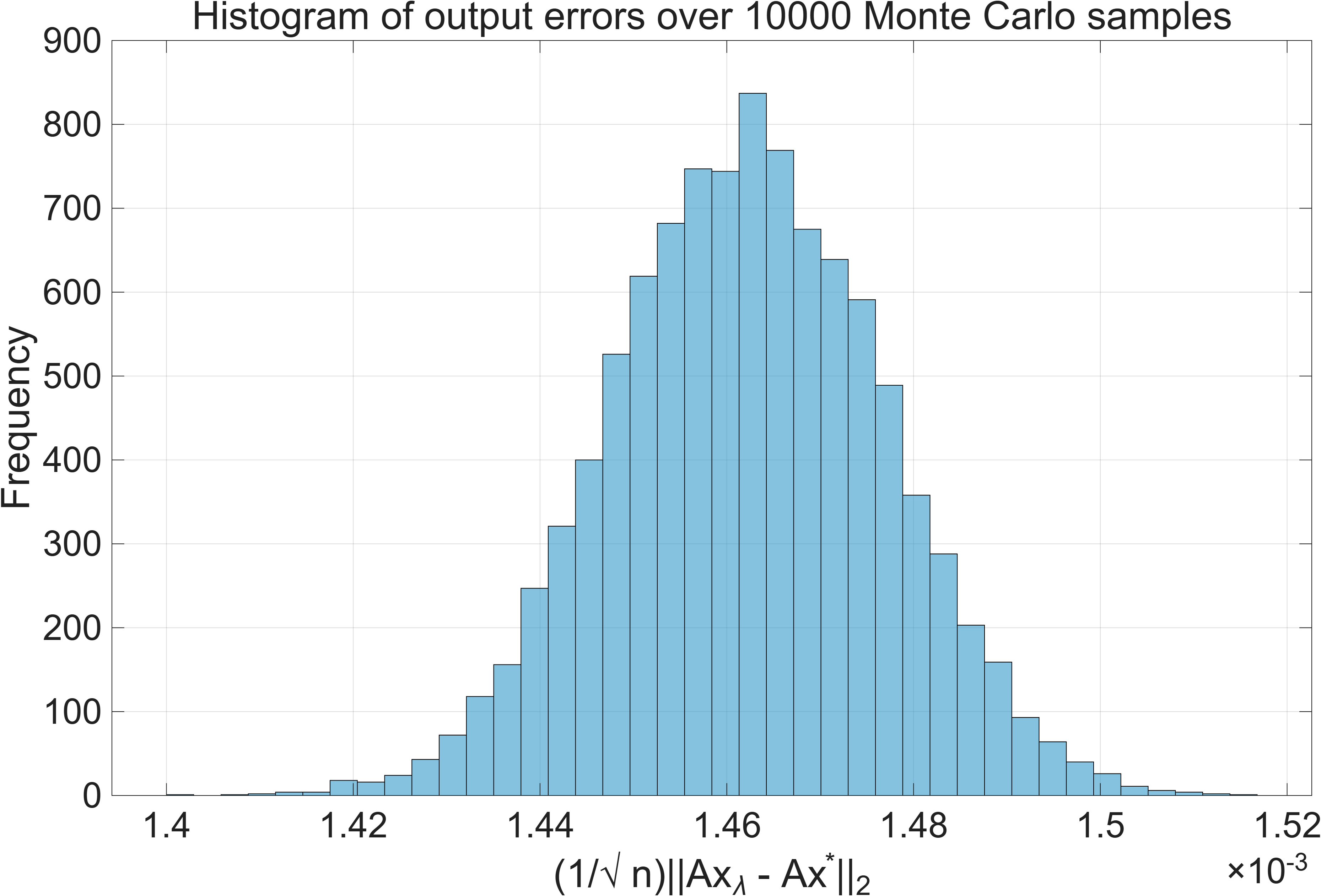}
        
    \end{subfigure}
    \begin{subfigure}{0.4\textwidth}
        \centering
        \includegraphics[width=\linewidth]{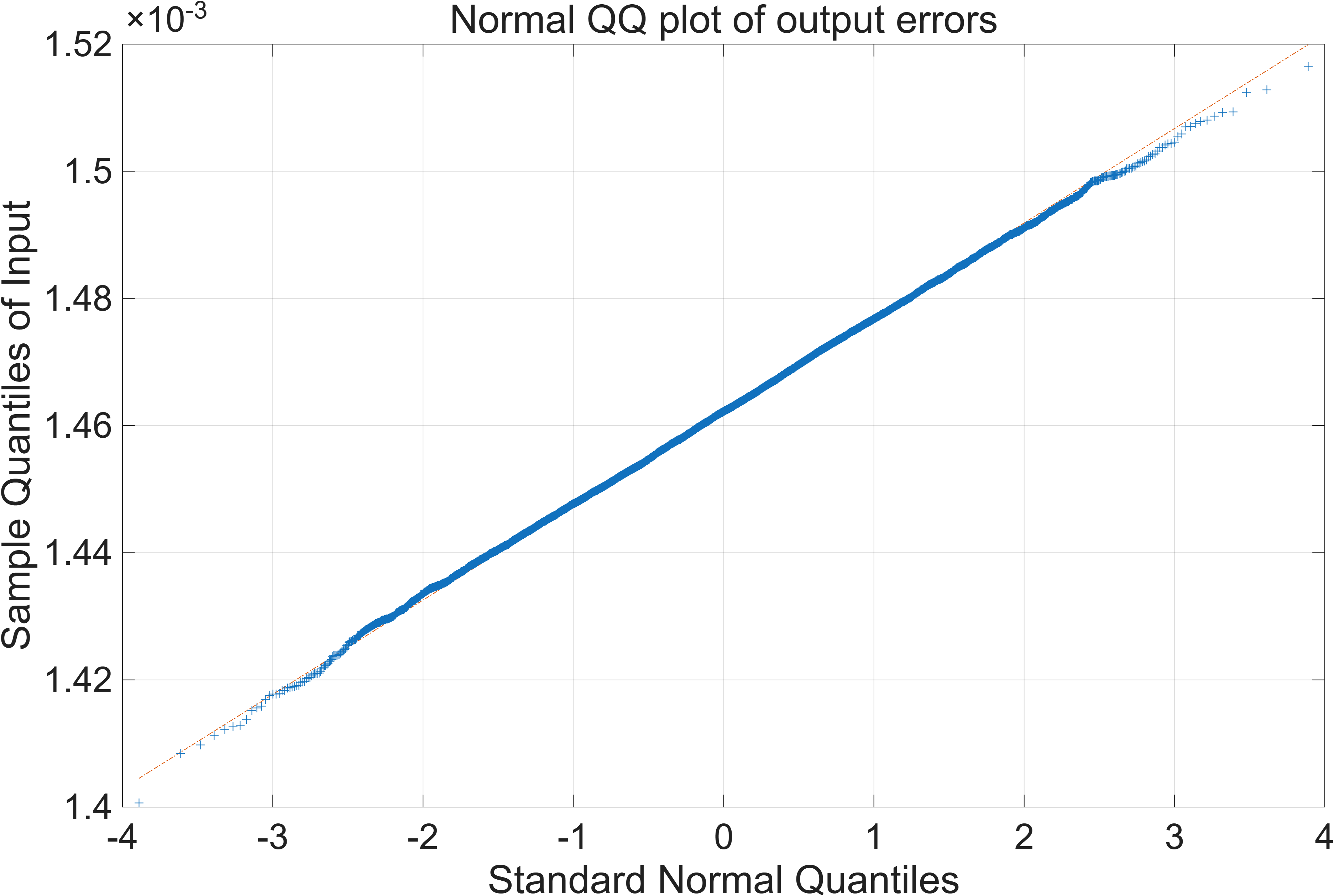}
    \end{subfigure}
    \caption{Empirical distribution of \(\frac{1}{\sqrt n}\|Ax_\lambda-Ax^\ast \|\) over \(10000\) Monte Carlo samples: histogram (left) and normal QQ plot (right). The results indicate an approximately Gaussian distribution with mild tail deviations.} 
    \label{ex2_fig:histogram_qq}
\end{figure}

\begin{itemize}
    \item Performance of the adaptive parameter-choice rule
\end{itemize}

As in Example \ref{nr:int}, we apply the adaptive parameter-choice Algorithm \ref{algo} to the cases \(\delta=0.01\), \(n=10000\), and \(\delta=0.001\), \(n=32400\); see Figures \eqref{fig:a}--\eqref{fig:f}. In the implementation, the iteration is terminated when the relative change in the parameter satisfies
\[
\frac{|\lambda_k-\lambda_{k-1}|}{\lambda_k}\leq \mathrm{tol}_\lambda,
\qquad \mathrm{tol}_\lambda=10^{-3}.
\]
For \(\delta=0.01\) and \(n=10000\), the parameter sequence \(\{\lambda_k\}\) decreases quickly and stabilizes at about \(2.23\times 10^{-4}\), while the output error \((1/\sqrt n)\|Ax_{\lambda_k}-Ax^\ast\|\) decreases rapidly in the first few iterations and then levels off. For the smaller noise level \(\delta=0.001\) with \(n=32400\), the iteration selects a substantially smaller parameter, approximately \(1.33\times 10^{-6}\), and achieves a lower final output error. Figures \eqref{fig:e}–\eqref{fig:f} show that the adaptive method stably recovers the principal image features in both cases, while the lower noise level yields a smaller selected regularization parameter and a visibly sharper reconstruction. These results support the stability and effectiveness of the proposed adaptive procedure.

\begin{figure}[hbt!]
    \centering
    \begin{subfigure}{0.4\textwidth}
        \centering        \includegraphics[width=\linewidth]{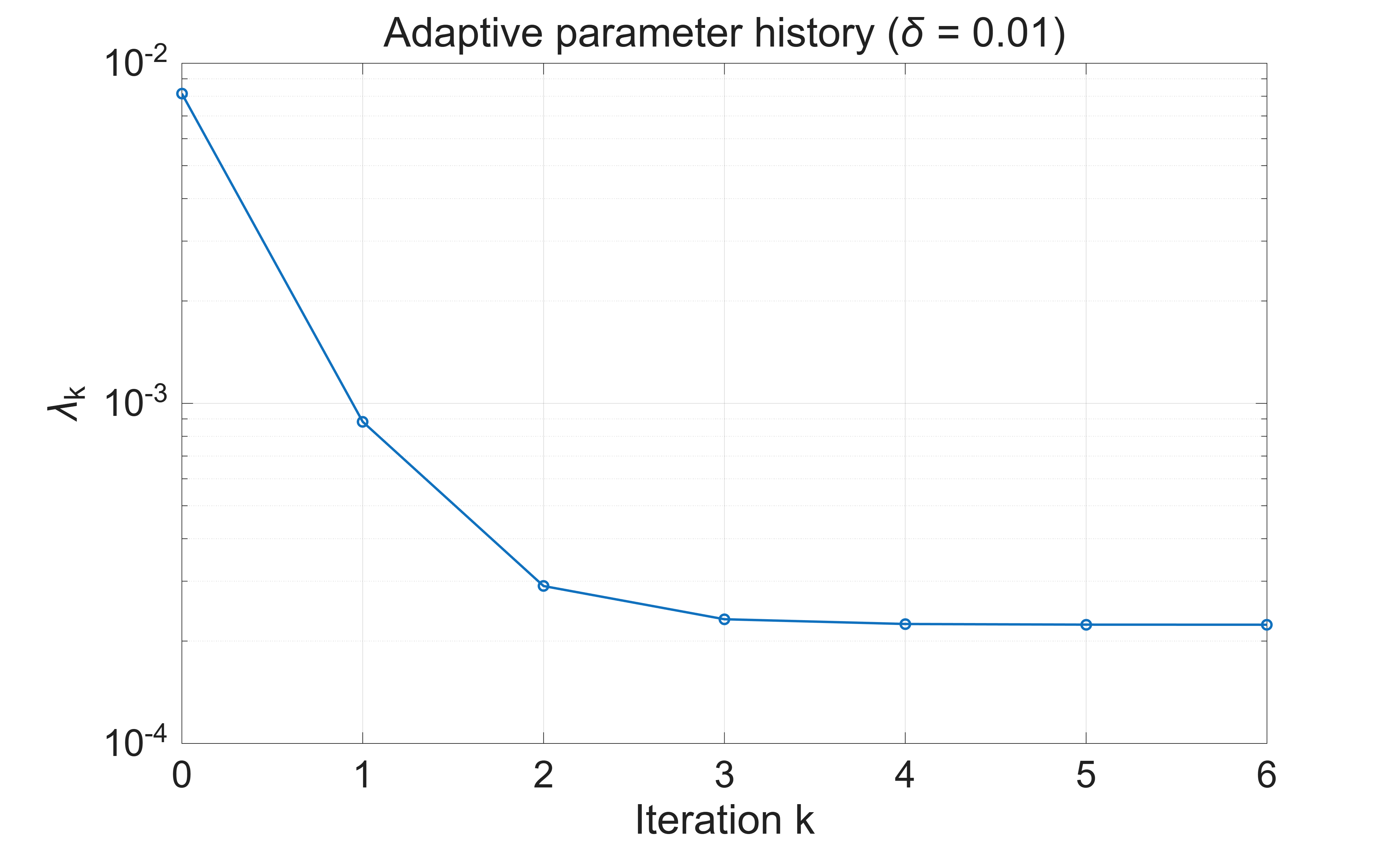}
        \caption{Convergence history of the adaptive regularization parameter \(\lambda_k\) for \(\delta=0.01\) and \( n = 10000\).}
        \label{fig:a}
    \end{subfigure}
    \begin{subfigure}{0.4\textwidth}
        \centering
    \includegraphics[width=\linewidth]{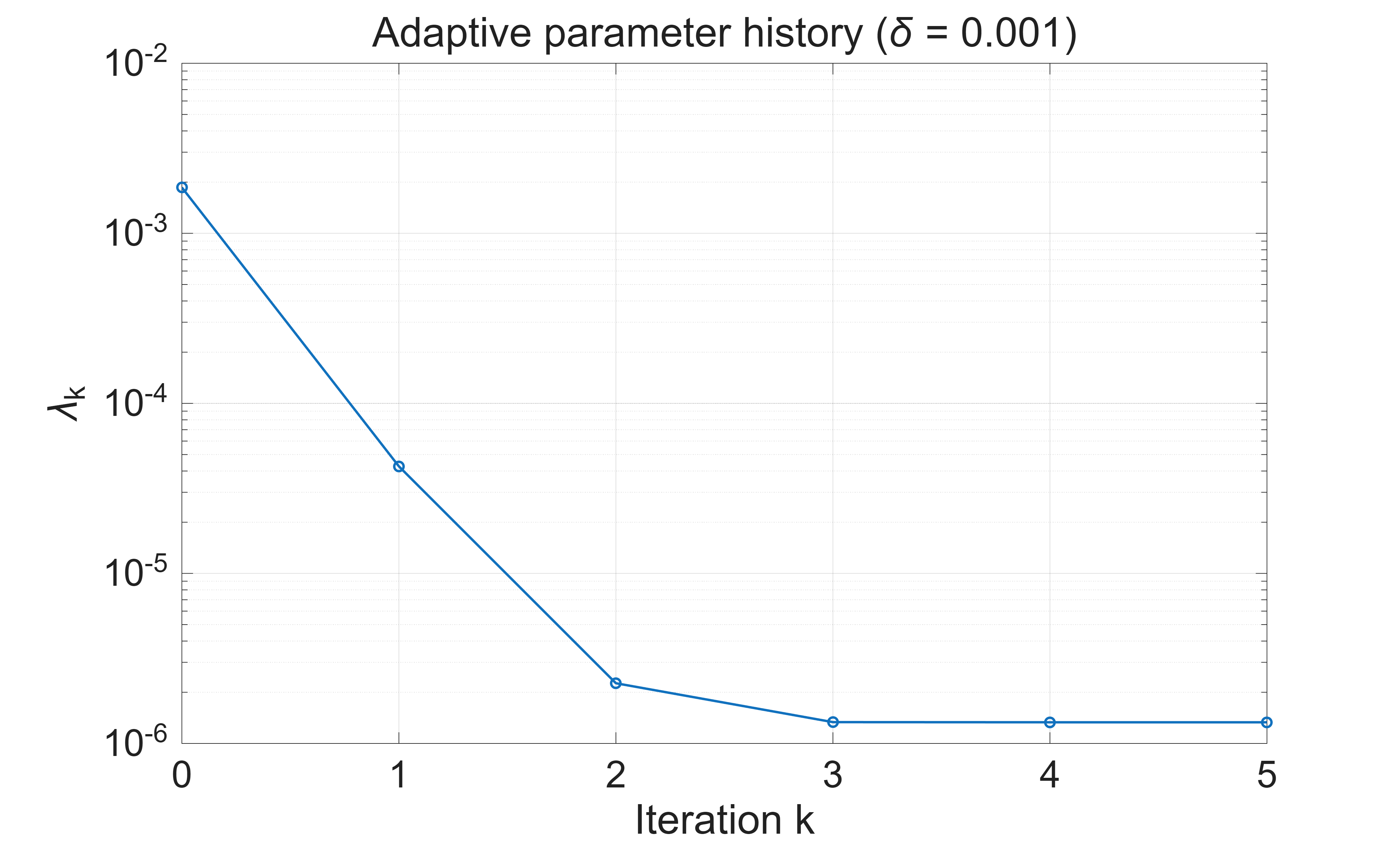}
        \caption{Convergence history of the adaptive regularization parameter \(\lambda_k\) for \(\delta=0.001\) and \(n = 32400\).}
    \end{subfigure}

    \begin{subfigure}{0.4\textwidth}
        \centering
        \includegraphics[width=\linewidth]{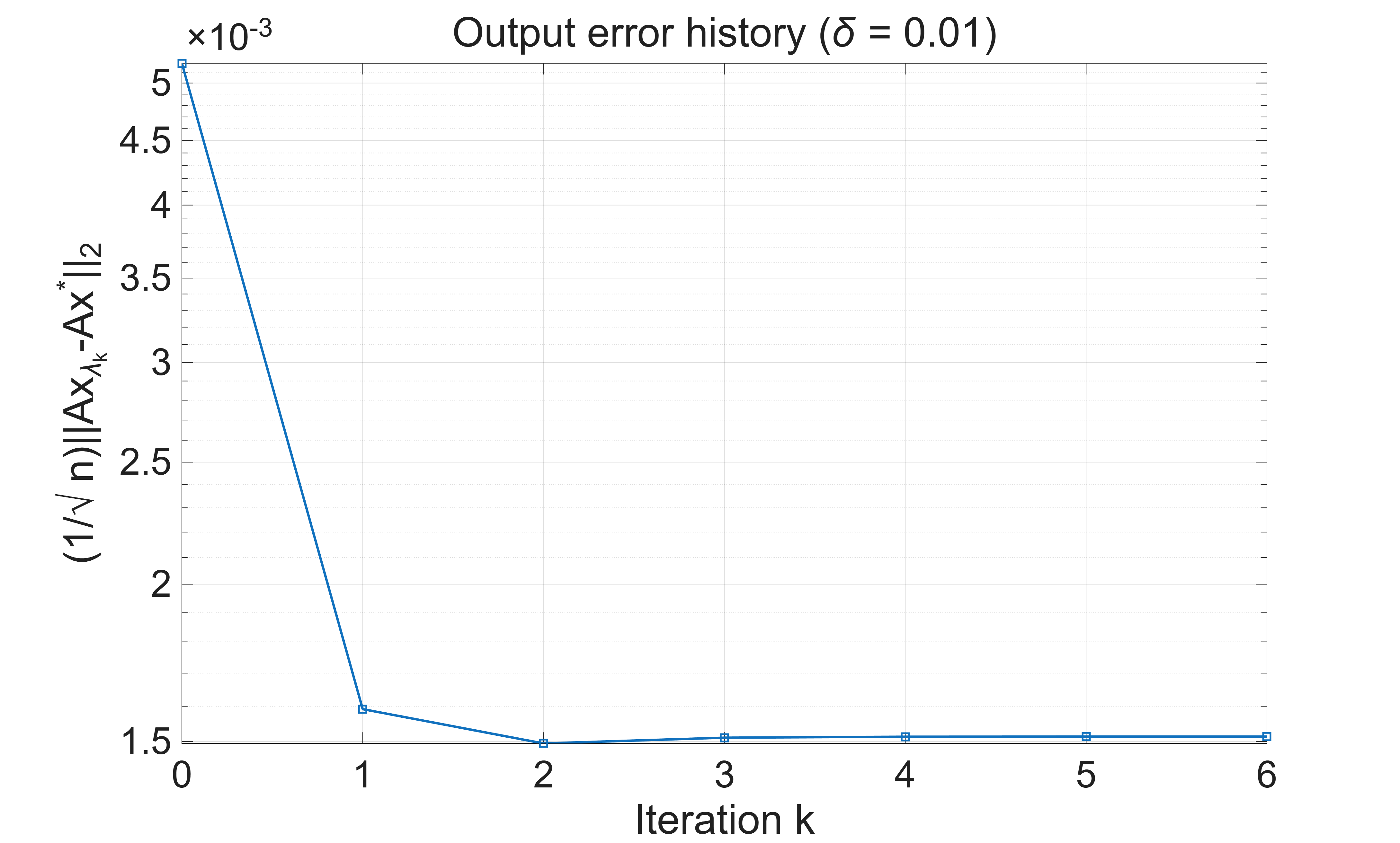}
        \caption{The output error \((1/\sqrt n)\|Ax_{\lambda_k}-Ax^\ast\|\) versus iteration number for \(\delta=0.01\) and \( n = 10000\).}
    \end{subfigure}
    \begin{subfigure}{0.4\textwidth}
        \centering
        \includegraphics[width=\linewidth]{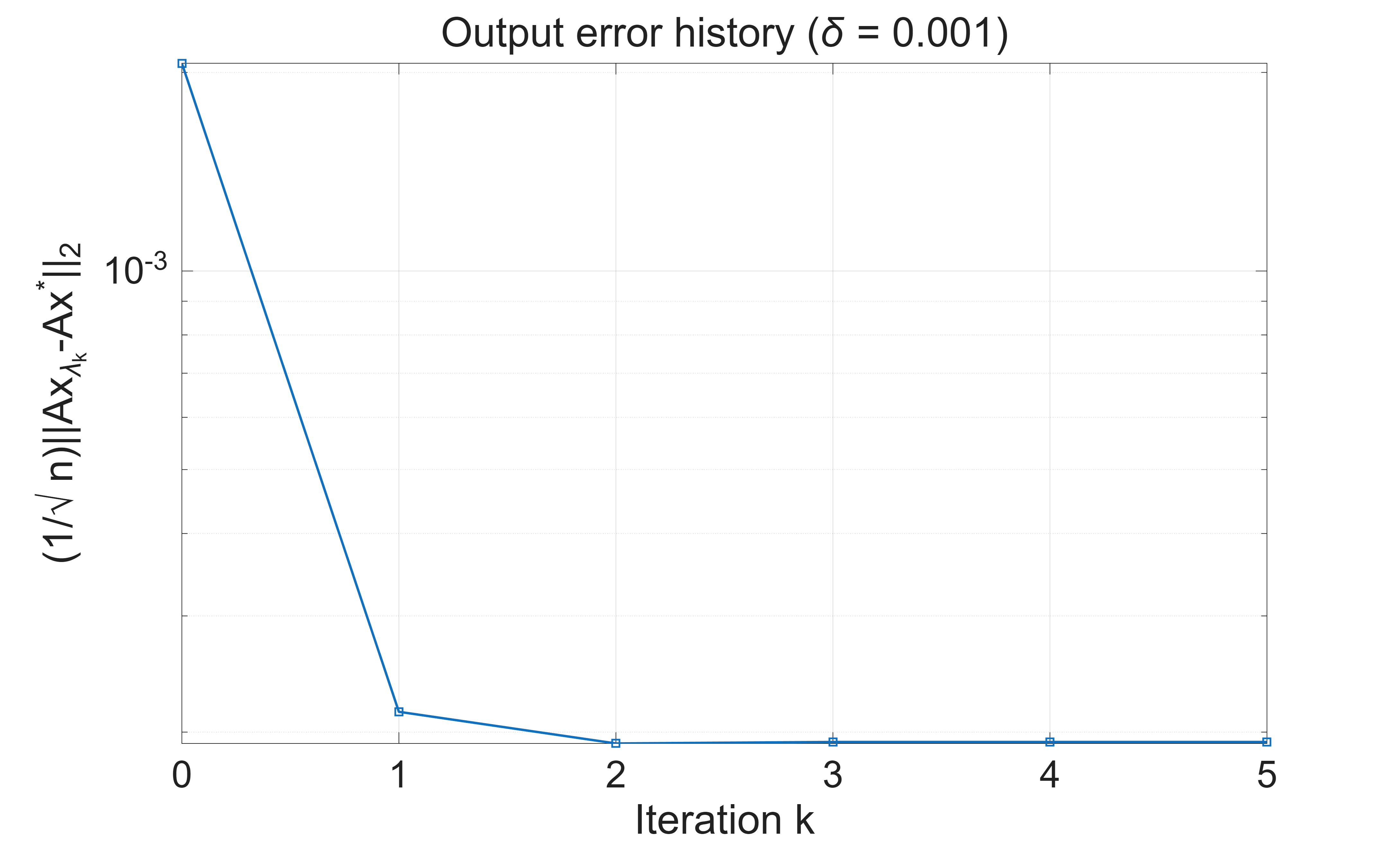}
        \caption{ The output error \((1/\sqrt n)\|Ax_{\lambda_k}-Ax^\ast\|\) versus iteration number for  \(\delta=0.001\) and \(n = 32400\).}
    \end{subfigure}

    \begin{subfigure}{0.48\textwidth}
        \centering
        \includegraphics[width=\linewidth]{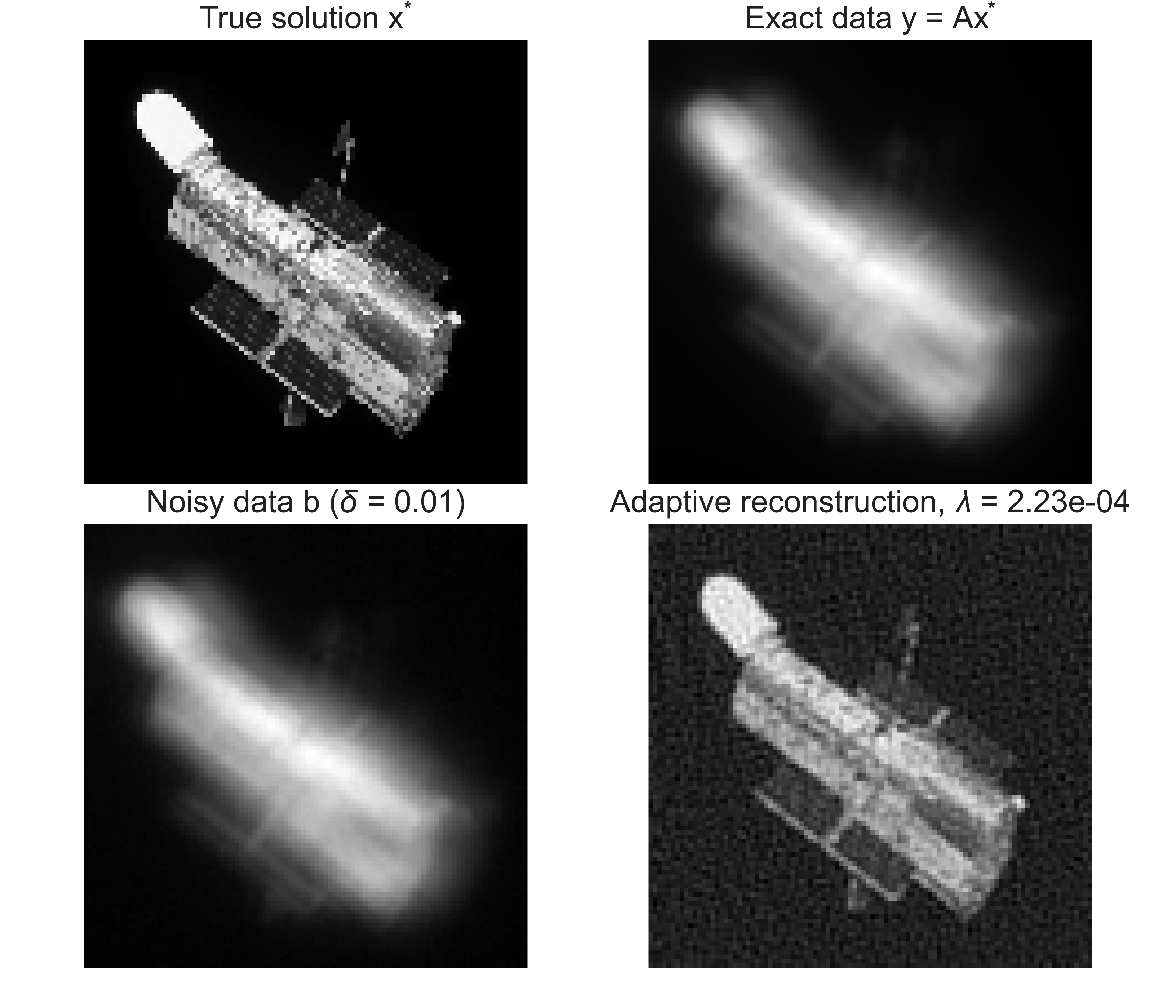}
        \caption{Comparison of the true solution, exact data, noisy data, and adaptive reconstruction for \(\delta=0.01\) and \( n = 10000\)}
        \label{fig:e}
    \end{subfigure}
    \hfill
    \begin{subfigure}{0.48\textwidth}
        \centering
        \includegraphics[width=\linewidth]{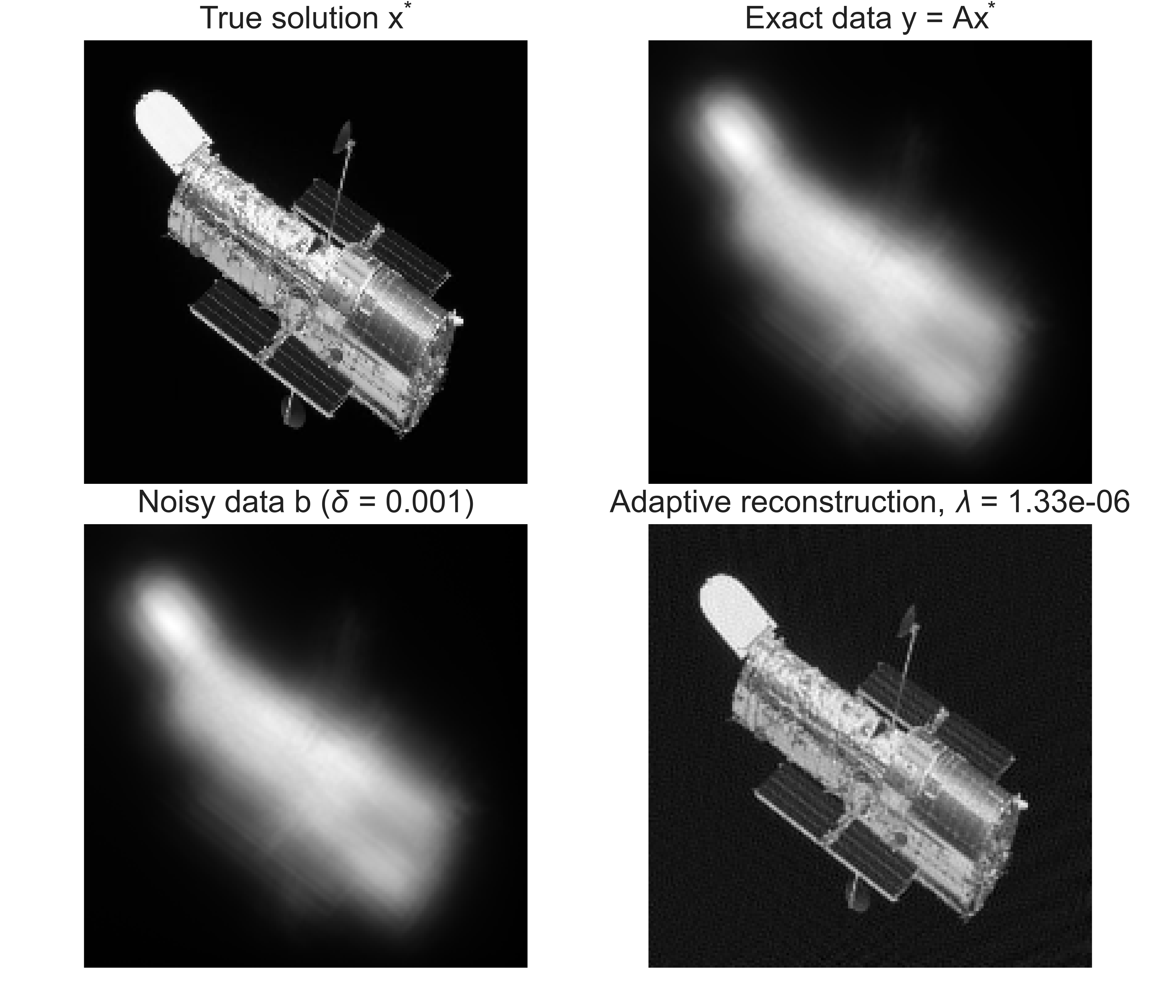}
        \caption{Comparison of the true solution, exact data, noisy data, and adaptive reconstruction for \(\delta=0.001\) and \(n = 32400\).}
        \label{fig:f}
    \end{subfigure}
     \label{fig:ex2_ad}
\end{figure}


\end{document}